\journal{arXiv.org}
\newtheorem{theorem}{Theorem}
\newtheorem{corollary}{Corollary}
\newtheorem{remark}{Remark}
\numberwithin{equation}{section}
\begin{document}

\begin{frontmatter}

\title{Generating functions for finite sums involving higher powers of binomial coefficients: Analysis of hypergeometric functions including new families of polynomials and numbers}

%% Group authors per affiliation:
\author{Yilmaz Simsek\corref{mycorrespondingauthor}}
\address{Department of Mathematics, Faculty of Science University of Akdeniz TR-07058 Antalya, Turkey}
\cortext[mycorrespondingauthor]{Corresponding author}
\ead{ysimsek@akdeniz.edu.tr}

\begin{abstract}
The origin of this study is based on not only explicit formulas
of finite sums involving higher powers of binomial coefficients, but also
explicit evaluations of generating functions for this sums. It should be
emphasized that this study contains both new results and literature surveys about some of the related results that have existed so far. With the aid of hypergeometric function,
generating functions for a new family of the combinatorial numbers, related
to finite sums of powers of binomial coefficients, are constructed. By
using these generating functions, a number of new identities have been
obtained and at the same time previously well-known formulas and identities
have been generalized. Moreover, on this occasion, we identify new families of polynomials including some
families of well-known numbers such as Bernoulli numbers, Euler numbers,
Stirling numbers, Franel numbers, Catalan numbers, Changhee numbers, Daehee
numbers and the others, and also for the polynomials such as the Legendre
polynomials, Michael Vowe polynomial, the Mirimanoff polynomial, Golombek
type polynomials, and the others. We also give both Riemann and $p$-adic integral representations of these polynomials. Finally,
we give combinatorial interpretations of these new families of numbers,
polynomials and finite sums of the powers of binomial coefficients. We
also give open questions for ordinary generating functions for these numbers.
\end{abstract}

\begin{keyword} Bernoulli numbers and polynomials\sep Euler numbers and
	polynomials\sep Frobenius-Euler numbers and polynomials\sep Stirling numbers\sep Franel numbers\sep Catalan numbers\sep Changhee numbers\sep Daehee numbers\sep $p$-adic
	integrals\sep Mirimanoff polynomial\sep Legendre polynomial\sep Generating functions.
	
	\MSC[2010] 11B83\sep 12D10\sep 11B68\sep 11S40\sep 11S80\sep
	26C05\sep 26C10\sep 30B40\sep 30C15.
	
\end{keyword}

\end{frontmatter}

\linenumbers

\section{Introduction}

The studies on the finite sums of binomial coefficients and their powers
have been studied for long ages. These areas attracted the attention of many
mathematicians, statisticians, physicists and engineers. Therefore, numerous
studies have been published in the field of sums of powers of binomial
coefficients in recent years with different methods and techniques. For
instance, very recently, with the aid of the mathematical model related to
finite sums of binomial coefficients with their powers, and the Franel
numbers and Brownian paths, Essam and Guttmann \cite{FizikESSAMe} have
studied real world problems associated watermelons with more than two
chains. Like this, there exist many different examples (\textit{cf}. \cite%
{Askey}-\cite{Vandivier}; see also the references cited in each of these
earlier works).

Golombek (\cite{golombek}, \cite{Golombek2}) gave the following novel
combinatorial sums of binomial coefficients:%
\begin{equation}
B(n,k)=\sum_{j=1}^{k}\left( 
\begin{array}{c}
k \\ 
j%
\end{array}
\right) j^{n}=\frac{d^{n}}{dt^{n}}\left( e^{t}+1\right) ^{k}\left\vert
_{t=0}\right. ,  \label{Gl}
\end{equation}%
$n$ and $k$ are a positive integer (\textit{cf}. \cite{golombek}) and%
\begin{equation*}
\sum\limits_{k=1}^{n}\left( 
\begin{array}{c}
n \\ 
k%
\end{array}
\right) ^{2}k^{m}=\frac{d^{m}}{dt^{m}}\left\{ \sum\limits_{k=0}^{n}\left( 
\begin{array}{c}
n \\ 
k%
\end{array}
\right) ^{2}e^{tk}\right\} \left\vert _{t=0}\right. ,
\end{equation*}%
where $m$ is a nonnegative integer (\textit{cf}. \cite{Golombek2}). In \cite%
{golombek}, Golombek gave some comments and relations between the above sums
and the following sequences, respectively:%
\begin{equation*}
2^{n}\text{, }n2^{n-1}\text{, }n(n+1)2^{n-2}\text{, }\ldots
\end{equation*}%
and%
\begin{equation*}
\left( 
\begin{array}{c}
2n \\ 
n%
\end{array}
\right) -1\text{, }n\left( 
\begin{array}{c}
2n-1 \\ 
n%
\end{array}
\right) \text{, }n^{2}\left( 
\begin{array}{c}
2n-2 \\ 
n-1%
\end{array}
\right) \text{, }n^{2}(n+1)\left( 
\begin{array}{c}
2n-3 \\ 
n-1%
\end{array}
\right) \text{, }...\text{.}
\end{equation*}

On the light of the above finite sums including binomial coefficients with
their powers and hypergeometric functions, the main motivation of this paper
is to investigate some properties of the following sums with their
generating functions:

\begin{equation}
F_{y_{6}}(t,n;\lambda ,p)=\frac{1}{n!}\sum\limits_{k=0}^{n}\left( 
\begin{array}{c}
n \\ 
k%
\end{array}
\right) ^{p}\lambda ^{k}e^{tk}  \label{y6a}
\end{equation}%
and 
\begin{equation}
P(x;m,n;\lambda ,p)=\sum\limits_{j=0}^{n}\left( 
\begin{array}{c}
n \\ 
j%
\end{array}
\right) ^{p}\lambda ^{j}\left( x+j\right) ^{m},  \label{Yp2}
\end{equation}%
where $m$ and $p$ are nonnegative integers.

We shall give detail investigations, studies and remarks on the above novel
finite sums in the following sections

\subsection{Notations}

Before giving the main results of this article, it will be useful to give
some special notations, formulas and generating functions.

In this paper, $\mathbb{C}$, $\mathbb{R}$, $\mathbb{Q}$, $\mathbb{Z}$, and $%
\mathbb{N}$ are the sets of complex numbers, real numbers, rational numbers,
integers, and positive integers, respectively. $\mathbb{N}_{0}=\{0,1,2,3,$%
\ldots $\}$ and $\mathbb{Z}_{p}$ denotes the set of $p$-adic integers.

The principal value $\ln z$ is the logarithm whose imaginary part lies in
the interval $(-\pi ,\pi ]$. Moreover%
\begin{equation*}
0^{n}=\left\{ 
\begin{array}{cc}
1, & (n=0) \\ 
0, & (n\in \mathbb{N)}%
\end{array}
\right.
\end{equation*}%
and the Pochhammer's symbol for the rising factorial is defined by the
following notation%
\begin{equation*}
\left( \lambda \right) _{v}=\frac{\Gamma \left( \lambda +v\right) }{\Gamma
\left( \lambda \right) }=\lambda (\lambda +1)\cdots (\lambda +v-1),
\end{equation*}%
and%
\begin{equation*}
\left( \lambda \right) _{0}=1
\end{equation*}%
for $\lambda \neq 1$, where $v\in \mathbb{N}$, $\lambda \in \mathbb{C}$ and $%
\Gamma \left( \lambda \right) $ denotes the (Euler) gamma function%
\begin{equation*}
\Gamma \left( \lambda \right) =\int\limits_{0}^{\infty }t^{\lambda
-1}e^{-t}dt.
\end{equation*}

The above improper integral exists for complex variable $\lambda $ with $%
Re(\lambda )>0$ (or, if one prefers only to think of real variables, for
real $\lambda >0$).

\begin{equation*}
\left( 
\begin{array}{c}
z \\ 
v%
\end{array}
\right) =\frac{z(z-1)\cdots (z-v+1)}{v!}=\frac{\left( z\right) ^{\underline{
_{v}}}}{v!}\text{ }(v\in \mathbb{N}\text{, }z\in \mathbb{C)}
\end{equation*}%
and%
\begin{equation*}
\left( 
\begin{array}{c}
z \\ 
0%
\end{array}
\right) =1.
\end{equation*}%
We note that%
\begin{equation*}
\left( -\lambda \right) _{v}=(-1)^{v}\left( \lambda \right) ^{\underline{%
_{v} }}.
\end{equation*}%
The following references can be given for briefly the above notations (%
\textit{cf}. \cite{Askey}-\cite{WolframKOEp}).

\subsection{Generating functions for the special polynomials and numbers}

The Bernoulli polynomials of order $k$ are defined by%
\begin{equation}
F_{B}(t,x;k)=\left( \frac{t}{e^{t}-1}\right) ^{k}e^{tx}=\sum_{n=0}^{\infty
}B_{n}^{(k)}(x)\frac{t^{n}}{n!}\text{ }(\left\vert t\right\vert <2\pi )
\label{BerPol.}
\end{equation}%
where $k\in \mathbb{Z}$. Observe that $B_{n}^{(1)}(x)=B_{n}(x)$ denotes the
Bernoulli polynomials. When $k=0$, we easily see that $B_{n}^{(0)}(x)=x^{n}$%
. When $x=0$, we easily see that $B_{n}^{(k)}=B_{n}^{(k)}(0)$, which denotes
the Bernoulli numbers of order $k$\ (\textit{cf}. \cite{Choi}, \cite{Comtet}%
, \cite{Grademir}, \cite{Moll2012}, \cite{Ozden}, \cite{SimsekFPTA}, \cite%
{SimsekAADM}, \cite{SrivastavaBook}, \cite{srivas18}; see also the
references cited in each of these earlier works).

The Euler polynomials of order $k$, $E_{n}^{(k)}(x)$ are defined by means of
the following generating function:%
\begin{equation}
F_{E}(t,x;k)=\left( \frac{2}{e^{t}+1}\right) ^{k}e^{tx}=\sum_{n=0}^{\infty
}E_{n}^{(k)}(x)\frac{t^{n}}{n!},\text{ }(\left\vert t\right\vert <\pi )
\label{Eul.Pol}
\end{equation}%
where $k\in \mathbb{Z}$. When $k=0$, we have $E_{n}^{(0)}(x)=x^{n}$. Observe
that $E_{n}^{(1)}(x)=E_{n}(x)$ denotes the Euler polynomials. When $x=0$, we
have $E_{n}^{(k)}=E_{n}^{(k)}(0)$, which denoteS the Euler numbers of order $%
k$ (\textit{cf}. \cite{Comtet}, \cite{Grademir}, \cite{MollJIU}, \cite{Ozden}%
, \cite{SimsekFPTA}, \cite{SimsekAADM}, \cite{SrivastavaBook}, \cite%
{srivas18}; see also the references cited in each of these earlier works).

The Apostol-Bernoulli numbers and polynomials are defined by%
\begin{equation}
F_{B}\left( t,x;\lambda \right) =\frac{te^{xt}}{\lambda e^{t}-1}
=\sum\limits_{n=0}^{\infty }\mathcal{B}_{n}\left( x;\lambda \right) \frac{
t^{n}}{n!},  \label{B1}
\end{equation}%
(\textit{cf}. \cite{Apostol}, \cite{Grademir}, \cite{KimDkimYsim}, \cite%
{Kucukoglu}, \cite{Luo}, \cite{Ozden}, \cite{SimsekFPTA}, \cite{SimsekAADM}, 
\cite{TJM2018}, \cite{srivas18}, \cite{SrivastavaChoi2012}).

The Apostol-Euler numbers and polynomials are defined by%
\begin{equation}
F_{E}\left( t,x;\lambda \right) =\frac{2e^{xt}}{\lambda e^{t}+1}
=\sum\limits_{n=0}^{\infty }\mathcal{E}_{n}\left( x;\lambda \right) \frac{
t^{n}}{n!},  \label{E1}
\end{equation}%
(\textit{cf}. \cite{Grademir}, \cite{KimDkimYsim}, \cite{Kucukoglu}, \cite%
{Luo}, \cite{Ozden}, \cite{SimsekFPTA}, \cite{SimsekAADM}, \cite{TJM2018}, 
\cite{srivas18}, \cite{SrivastavaChoi2012}).

We now give generating functions for the Stirling numbers, which count the
number of ways to split a set of $n$ elements into $k$ nonempty parts; and
many others (\textit{cf}. \cite{Charamb}, \cite{Moll2012} \cite{Comtet}, 
\cite{Grademir}, \cite{SrivastavaBook}).

Let $v\in \mathbb{N}_{0}$. The Stirling numbers of the second kind are
defined by%
\begin{equation}
F_{S}(t,v)=\frac{\left( e^{t}-1\right) ^{v}}{v!}=\sum_{n=0}^{\infty }S(n,v) 
\frac{t^{n}}{n!}.  \label{SN-1}
\end{equation}%
and%
\begin{equation*}
x^{n}=\sum_{v=0}^{n}\left( 
\begin{array}{c}
x \\ 
v%
\end{array}
\right) v!S(n,v)
\end{equation*}%
(\textit{cf}. \cite{Charamb}, \cite{Moll2012} \cite{Cakic}, \cite{Comtet}, 
\cite{Grademir}, \cite{SrivastavaBook}; see also the references cited in
each of these earlier works).

Let $k\in \mathbb{N}_{0}$. The Stirling numbers of the first kind are
defined by%
\begin{equation}
F_{S_{1}}\left( t,k\right) =\frac{\left( \log \left( 1+t\right) \right) ^{k} 
}{k!}=\sum\limits_{n=0}^{\infty }s\left( n,k\right) \frac{t^{n}}{n!}
\label{S1}
\end{equation}%
(\textit{cf}. \cite{Charamb}, \cite{Moll2012} \cite{Cakic}, \cite{Comtet}, 
\cite{Grademir}, \cite{SrivastavaBook}; see also the references cited in
each of these earlier works).

Let $k\in \mathbb{N}_{0}$ and $\lambda \in \mathbb{C}$. We \cite{SimsekAADM}
defined the numbers $y_{1}(n,k;\lambda )$ by the following generating
function:%
\begin{equation}
F_{y_{1}}(t,k;\lambda )=\frac{1}{k!}\left( \lambda e^{t}+1\right)
^{k}=\sum_{n=0}^{\infty }y_{1}(n,k;\lambda )\frac{t^{n}}{n!}.  \label{ay1}
\end{equation}%
By using the above function, we have 
\begin{equation}
y_{1}(n,k;\lambda )=\frac{1}{k!}\sum_{j=0}^{k}\left( 
\begin{array}{c}
k \\ 
j%
\end{array}
\right) j^{n}\lambda ^{j}  \label{ay2}
\end{equation}%
where $n\in \mathbb{N}_{0}$ (\textit{cf}. \cite{SimsekAADM}).

The numbers $Y_{n}\left( \lambda \right) $ are defined by%
\begin{equation*}
F_{Y}\left( t,x;\lambda \right) =\frac{2}{\lambda ^{2}t+\lambda -1}
=\sum\limits_{n=0}^{\infty }Y_{n}\left( \lambda \right) \frac{t^{n}}{n!}
\end{equation*}
(\textit{cf}. \cite[Eq-(2.13)]{TJM2018}).

Generalized hypergeometric function $_{p}F_{p}$ can be written in the form%
\begin{equation*}
_{p}F_{q}\left[ 
\begin{array}{c}
\alpha _{1},...,\alpha _{p} \\ 
\beta _{1},...,\beta _{q}%
\end{array}
;z\right] =\sum\limits_{m=0}^{\infty }\left( \frac{\prod\limits_{j=1}^{p}
\left( \alpha _{j}\right) _{m}}{\prod\limits_{j=1}^{q}\left( \beta
_{j}\right) _{m}}\right) \frac{z^{m}}{m!}.
\end{equation*}%
The above series is also written as follows%
\begin{equation*}
_{p}F_{q}\left[ 
\begin{array}{c}
\alpha _{1},...,\alpha _{p} \\ 
\beta _{1},...,\beta _{q}%
\end{array}
;z\right] =\text{ }_{p}F_{p}\left( \alpha _{1},...,\alpha _{p};\beta
_{1},...,\beta _{q};z\right) .
\end{equation*}%
The above series converges for all $z$ if $p<q+1$, and for $\left\vert
z\right\vert <1$ if $p=q+1$. For this series one can assumed that all
parameters have general values, real or complex, except for the $\beta _{j}$%
, $j=1,2,...,q$ none of which is equal to zero or to a negative integer. If $%
p=q+1$, then the series is absolutely convergent on the set $\left\{ z\in 
\mathbb{C}:\left\vert z\right\vert <1\right\} $ if%
\begin{equation*}
\func{Re}\left( \sum\limits_{j=1}^{q}\beta j-\sum\limits_{j=1}^{p}\alpha
j\right) >0.
\end{equation*}%
For instance,%
\begin{equation*}
_{0}F_{0}\left( z\right) =e^{z},
\end{equation*}

\begin{equation*}
_{2}F_{1}\left( \alpha _{1};\alpha _{2};\beta _{1};z\right)
=\sum\limits_{k=0}^{\infty }\frac{\left( \alpha _{1}\right) _{k}\left(
\alpha _{2}\right) _{k}}{\left( \beta _{1}\right) _{k}}\frac{z^{k}}{k!},
\end{equation*}%
and%
\begin{equation*}
_{1}F_{0}\left[ 
\begin{array}{c}
b \\ 
-%
\end{array}
;x \right] =\frac{1}{\left( 1-x\right) ^{b}}
\end{equation*}%
(\textit{cf}. \cite{Luke}, \cite{SrivastavaBook}, \cite{SrivastavaChoi2012}, 
\cite{Temme}, \cite{Trembly}, \cite{WolframKOEp}).

The rest of this paper contains the following sections:

In Section 2, we give brief survey of the numbers $B(n,k)$.

In Section 3, we give a generating function for sums of finite sums of the
binomial coefficients with higher powers. We define a new family of
combinatorial sums, $y_{6}(m,n;\lambda ,p)$ and generalization of the Franel
numbers. We investigate some properties of these functions and numbers. A
relation between the Legendre polynomials, Stirling numbers, Changhee
numbers, Daehee numbers and the numbers $y_{6}(m,n;\lambda ,p)$ are given.
Remarks and comments on these functions and numbers are given.

In Section 4, we define a new family of special polynomials whose
coefficients are the numbers $y_{6}(m,n;\lambda ,p)$. We give not only the
derivative formulas for these polynomials, but also Riemann integral and $p$%
-adic integral formulas of these polynomials. Relations between these
polynomials, Mirimanoff polynomial, Frobenius-Euler, Bernoulli polynomials,
the Michael Vowe polynomials and the Legendre polynomials are given.

In Section 5, we give ordinary generating functions for the numbers $%
y_{6}\left( m,n;\lambda ;p\right) $. We also give some questions and
comments on these ordinary generating functions.

In Section 6, with the aid of functional equations for the generating
functions of the special numbers and polynomials, we derive some identities
and relations including the Bernoulli polynomials of order $k$, the Euler
polynomials of order $k$, the Stirling numbers, and the numbers $%
y_{6}(m,n;\lambda ,p)$.

\section{Brief survey of the numbers $B(n,k)$}

There are many combinatorial applications for (\ref{ay2}). Substituting $%
\lambda =1$ into (\ref{ay2}), yields 
\begin{equation}
B(n,k)=k!y_{1}(n,k;1).  \label{CC2}
\end{equation}

When we look at books, articles and other written documents about the sums
of the powers of binomial coefficients in the literature so far, in addition
to primarily book containing interesting results, written by Boas and Moll 
\cite{Boros Moll}, we were able to come across to other resources.
Meanwhile, we see that very interesting formulas and identities associated
with the binomial coefficients were given by Boas and Moll in their book 
\cite{Boros Moll}. In particular, there are some formulas and questions
containing not only the numbers $B(n,k)$, which are given below, but also
the other numbers. For instance, Boas and Moll \cite[Exercise 1.4.6.]{Boros
Moll} gave some exercises on the $B(0,k)$, $B(1,k)$, $B(2,k)$ and $B(6,k)$.
They also raised the following question as in \cite[Project 1.4.1.]{Boros
Moll}:

$a)$\textit{\ Use a symbolic language to observe that} 
\begin{equation*}
B(n,k)=2^{k-n}T_{n}(k)
\end{equation*}%
\textit{where }$T_{n}(k)$\textit{\ is a polynomial in }$k$\textit{\ of
degree }$n$\textit{.}

$b)$\textit{\ Explore properties of the coefficients of }$T_{n}(k)$\textit{.}

$c)$\textit{\ What can you say about the factors of }$T_{n}(k)$\textit{? The
factorization of a polynomial can be accomplished by the Mathematica command
Factor \textup{\cite[p. 15]{Boros Moll}}.}

Besides the above questions, Boas and Moll \cite[Exercise 1.4.6. (b)]{Boros
Moll} also gave the following question:

\textit{Determine formulas for the values of the alternating sums}%
\begin{equation*}
\sum_{j=0}^{k}(-1)^{j}\left( 
\begin{array}{c}
k \\ 
j%
\end{array}
\right) j^{n},
\end{equation*}%
\textit{for }$n=0$\textit{, }$1$\textit{\ and }$2$\textit{. Make a general
conjecture.}

Furthermore, Spivey \cite[Identity 12]{Spevy} and Boyadzhiev \cite[p.4,
Eq-(7)]{Boyadzhiev} proved that the numbers $B(n,k)$ are the linear
combination of the Stirling numbers of the second kind, $S(m,j)$,
respectively:%
\begin{equation}
B(m,n)=\sum_{j=0}^{m}\left( 
\begin{array}{c}
n \\ 
j%
\end{array}
\right) j!2^{n-j}S(m,j)  \label{Bs-1}
\end{equation}%
and%
\begin{equation*}
\sum_{j=0}^{k}\left( 
\begin{array}{c}
k \\ 
j%
\end{array}
\right) j^{n}x^{j}=\sum_{j=0}^{n}\left( 
\begin{array}{c}
k \\ 
j%
\end{array}
\right) j!S(n,j)x^{j}(1+x)^{k-j}.
\end{equation*}

In \cite[Remark 1]{SimsekAADM}, we see that%
\begin{equation*}
\sum_{j=0}^{k}(-1)^{j}\left( 
\begin{array}{c}
k \\ 
j%
\end{array}
\right) j^{n}=(-1)^{k}k!S(n,k).
\end{equation*}

On the other hand, in \cite[Remark 2]{SimsekAADM}, we remarked that the
numbers $B(n,k)$ are of the following form%
\begin{equation*}
a_{k}2^{k}
\end{equation*}%
where $\left(a_{k}\right)$ is an integer sequence depend on $k$,
independently of the aforementioned recent studies.

In addition, the following recursive formulas for the numbers $B(m,n)$,
which are not noticed by us in the studies published before \cite{SimsekAADM}%
, were also given in \cite[p. 12]{SimsekAADM}.

Let $d\in \mathbb{N}$ and $m_{0}$, $m_{1}$, $m_{2}$,$...$, $m_{d}\in \mathbb{%
\ Q}$. Let $m_{0}\neq 0$. Then%
\begin{equation}
\sum_{v=0}^{d-1}m_{v}B(d-v,k)=2^{k-d}\left( 
\begin{array}{c}
k \\ 
d%
\end{array}
\right)  \label{CB1}
\end{equation}%
and%
\begin{equation*}
B(d,k)=\frac{2^{k-d}}{m_{0}}\left( 
\begin{array}{c}
k \\ 
d%
\end{array}
\right) -\sum_{v=1}^{d-1}\frac{m_{v}}{m_{0}}B(d-v,k).
\end{equation*}

By novel computation method including the well-known Faa di Bruno's formula,
Xu \cite[Eq-(27)]{Xu} proved that the numbers $m_{v}$\ are given by the
following formula:%
\begin{equation*}
m_{v}=\frac{1}{d!}s(d,d-v)
\end{equation*}%
where $d\geq 1$ and $v=0$, $1$, \ldots, $d$.

We have already mentioned in the above that Boas and Moll \cite{Boros Moll}
formulated the numbers $B(d,k)$ by the polynomials $T_{d}(k)$. We \cite%
{SimsekAADM} also gave the following another expanded form of the numbers $%
B(d,k)$:%
\begin{equation*}
B(d,k)=(k^{d}+x_{1}k^{d-1}+x_{2}k^{d-2}+\cdots
++x_{d-2}k^{2}+x_{d-1}k)2^{k-d},
\end{equation*}%
where $x_{1}$, $x_{2}$,$\ldots $, $x_{d-1}$ are integers and $d$ is a
positive integer.

Xu \cite[Eq-(27)]{Xu} proved that the coefficients $x_{1}$, $x_{2}$,$\ldots $%
, $x_{d-1}$ are computed by the following formula:%
\begin{equation*}
x_{d-l}=\sum_{j=l}^{d}s(j,l)S(d,j)2^{d-j}
\end{equation*}%
where $l=1$, $2$, \ldots, $d-1$.

On the other hand, in \cite{SimsekAADM}, we also gave the following further
question:

\textit{Is it possible to find $f_{d}(x)$ function which satisfy} 
\begin{equation*}
f_{d}(x)=\sum_{k=0}^{\infty }B(n,k)x^{k}?
\end{equation*}

In \cite{SimsekAADM}, we gave only the followings for the numbers $B(0,k)$
and $B(1,k)$:%
\begin{equation*}
\sum_{k=0}^{\infty }B(0,k)x^{k}=\frac{1}{1-2x}
\end{equation*}%
and%
\begin{equation*}
\sum_{k=1}^{\infty }B(1,k)x^{k}=\frac{x}{\left( 1-2x\right) ^{2}}
\end{equation*}%
where $\left\vert x\right\vert <\frac{1}{2}$. But, Xu \cite[Eq-(32)]{Xu}
proved that the functions $f_{d}(x)$ have the following form:%
\begin{equation*}
f_{d}(x)=\sum_{j=1}^{d}j!S(d,j)\frac{x^{j}}{\left( 1-2x\right) ^{j+1}},
\end{equation*}%
where $d\geq 1$.

In \cite[Eq-(28)-(29)]{SimsekAADM}, we gave the following relations:

\begin{equation}
E_{n}^{(-k)}(\lambda )=k!2^{-k}y_{1}(n,k;\lambda )  \label{Cab3}
\end{equation}%
where $k\in \mathbb{N}_{0}$ and $E_{n}^{(-k)}(\lambda )$ denotes the
Apostol-Euler numbers of order $-k$. Substituting $\lambda =1$ into the
above equation, we also have%
\begin{equation}
E_{n}^{(-k)}=2^{-k}B(n,k).  \label{Caa3}
\end{equation}

Finally, we complete this section by mentioning that maybe there are other
different papers or survey manuscripts on the numbers $B(d,k)$ and their
applications.

\section{Generating functions for finite sums of the Binomial coefficient
with higher powers}

In this section, we give explicit formulas of finite sums involving powers
of binomial coefficients with their generating functions. The type of these
sums are given by (\ref{y6a}) and (\ref{Yp2}). That is, for $m,p\in \mathbb{N%
}_{0}$, we shall study and investigate the following finite sums of the
binomial coefficient with higher powers: 
\begin{equation*}
\sum\limits_{j=0}^{n}\left( 
\begin{array}{c}
n \\ 
j%
\end{array}
\right) ^{p}\lambda ^{j}j^{m}.
\end{equation*}%
including the following the special case%
\begin{equation*}
\sum\limits_{j=0}^{n}\left( 
\begin{array}{c}
n \\ 
j%
\end{array}
\right) ^{p}j^{m}.
\end{equation*}%
The above finite sum has a long history. These sums have been studied in a
general framework by the works of Golombek \cite{golombek}-\cite{Golombek2},
Cusick \cite{Cusick} and Moll \cite{Moll2012}, and also \cite{Boros Moll}.

We construct the following generating function for a new family of new
numbers $y_{6}(n,k;\lambda ,p)$:%
\begin{equation}
F_{y_{6}}(t,m;\lambda ,p)=\frac{1}{n!}\text{ }_{p}F_{p-1}\left[ 
\begin{array}{c}
-n,-n,...,-n \\ 
1,1,...,1%
\end{array}
;\left( -1\right) ^{p}\lambda e^{t}\right] =\sum_{m=0}^{\infty
}y_{6}(m,n;\lambda ,p)\frac{t^{m}}{m!},  \label{y6}
\end{equation}%
where $n,p\in \mathbb{N}$ and $\lambda \in \mathbb{R}$ or $\mathbb{C}$.

Observe that equation (\ref{y6}) yields (\ref{y6a}).

\begin{remark}
Substituting $\lambda =1$ and $p=2$ into \textup{(\ref{y6a})}, we have 
\begin{equation*}
g\left( n,t\right) =\sum\limits_{k=0}^{n}\left( 
\begin{array}{c}
n \\ 
k%
\end{array}
\right) ^{2}e^{tk}=n!F_{y_{6}}(t,n;1,2)
\end{equation*}
(\textit{cf}. \textup{\cite{Golombek2}}).
\end{remark}

Using Taylor series for $e^{tk}$, we obtain the following relation: 
\begin{equation*}
\sum_{m=0}^{\infty }y_{6}(m,n;\lambda ,p)\frac{t^{m}}{m!}=\sum_{m=0}^{\infty
}\left( \frac{1}{n!}\sum\limits_{k=0}^{n}\left( 
\begin{array}{c}
n \\ 
k%
\end{array}
\right) ^{p}k^{m}\lambda ^{k}\right) \frac{t^{m}}{m!}.
\end{equation*}%
Comparing the coefficients of $\frac{t^{m}}{m!}$ on both sides of the above
equation, we find that the numbers $y_{6}(m,n;\lambda ,p)$ have the
following explicit formula:

\begin{theorem}
Let $n,m,p\in \mathbb{N}_{0}$. The following identity holds:  
\begin{equation}
y_{6}(m,n;\lambda ,p)=\frac{1}{n!}\sum\limits_{k=0}^{n}\left( 
\begin{array}{c}
n \\ 
k%
\end{array}
\right) ^{p}k^{m}\lambda ^{k}.  \label{y6b}
\end{equation}
\end{theorem}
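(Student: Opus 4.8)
The plan is to establish the explicit formula in two stages: first reduce the hypergeometric representation in (\ref{y6}) to the finite-sum form (\ref{y6a}), and only then extract the Taylor coefficients in $t$. The coefficient extraction is the routine part (indeed it is already carried out in the lines preceding the statement); the genuine content is the reduction of the ${}_pF_{p-1}$ to an elementary finite sum.

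First I would unwind the definition of the generalized hypergeometric series. With all $p$ upper parameters equal to $-n$, all $p-1$ lower parameters equal to $1$, and argument $z=(-1)^p\lambda e^t$, the series reads $\sum_{k=0}^{\infty}\frac{((-n)_k)^p}{((1)_k)^{p-1}}\frac{z^k}{k!}$. Two observations collapse it. Since $(1)_k=k!$, the denominator becomes $(k!)^{p-1}\cdot k!=(k!)^p$. Since $(-n)_k=(-1)^k\,n!/(n-k)!$ for $0\le k\le n$ while $(-n)_k=0$ for $k>n$ (the factor $-n+n=0$ annihilates all later terms), the series terminates at $k=n$ and the numerator becomes $(-1)^{pk}(n!/(n-k)!)^p$. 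The two sign factors $(-1)^{pk}$ from the numerator and $(-1)^{pk}$ from $z^k=(-1)^{pk}\lambda^k e^{tk}$ multiply to $1$, and the surviving quotient $(n!/(n-k)!)^p/(k!)^p$ is precisely $\binom{n}{k}^p$. Hence the hypergeometric factor equals $\sum_{k=0}^{n}\binom{n}{k}^p\lambda^k e^{tk}$, which after division by $n!$ is exactly (\ref{y6a}), confirming the remark that (\ref{y6}) yields (\ref{y6a}).

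Next I would expand $e^{tk}=\sum_{m=0}^{\infty}k^m\,t^m/m!$. Because the outer sum over $k$ is finite (it runs only to $n$), it may be interchanged with the series in $m$ without any convergence concern, giving $F_{y_6}(t,n;\lambda,p)=\sum_{m=0}^{\infty}\bigl(\frac{1}{n!}\sum_{k=0}^{n}\binom{n}{k}^p\lambda^k k^m\bigr)\frac{t^m}{m!}$. Comparing with the defining expansion $F_{y_6}(t,n;\lambda,p)=\sum_{m=0}^{\infty}y_6(m,n;\lambda,p)\,t^m/m!$ and equating the coefficients of $t^m/m!$ yields the claimed identity (\ref{y6b}).

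The only point requiring care is the hypergeometric reduction, specifically the bookkeeping of the parity factor $(-1)^p$: the sign $(-1)^{pk}$ produced by the Pochhammer symbols $((-n)_k)^p$ must cancel the sign carried by $z^k$. This is exactly why the argument of the ${}_pF_{p-1}$ is chosen to be $(-1)^p\lambda e^t$ rather than $\lambda e^t$, and checking the cancellation for both even and odd $p$ is the substantive step. Once it is verified, the emergence of $\binom{n}{k}^p$ and the final coefficient comparison are immediate.
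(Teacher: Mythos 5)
Your proof is correct and follows essentially the same route as the paper: expand $e^{tk}$ in its Taylor series, interchange the finite sum over $k$ with the series in $m$, and compare coefficients of $t^{m}/m!$ against the defining expansion. The only difference is that you additionally verify the reduction of the ${}_{p}F_{p-1}$ in (\ref{y6}) to the finite sum (\ref{y6a}) via $(1)_{k}=k!$ and $(-n)_{k}=(-1)^{k}n!/(n-k)!$, a step the paper merely asserts with ``Observe that''; your bookkeeping of the $(-1)^{pk}$ cancellation there is accurate.
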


Taking $m$ times derivative of (\ref{y6}), with respect to $t$, we get the
following formula for the numbers $y_{6}(m,n;\lambda ,p)$:

\begin{corollary}
Let $m\in \mathbb{N}$. The we have 
\begin{equation}
y_{6}(m,n;\lambda ,p)=\frac{\partial ^{m}}{\partial t^{m}}\left\{
F_{y_{6}}(t,m,n;\lambda ,p)\right\} \left\vert _{t=0}\right. .  \label{y6G}
\end{equation}
\end{corollary}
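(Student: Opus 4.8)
The plan is to recover the coefficients of the exponential generating function in (\ref{y6}) by repeated differentiation, which is simply the standard coefficient-extraction principle for exponential generating functions. First I would observe that, since every numerator parameter of the hypergeometric factor in (\ref{y6}) equals the negative integer $-n$, the Pochhammer symbol $\left( -n\right) _{j}$ vanishes for $j>n$; hence the series $_{p}F_{p-1}$ terminates, and by the remark following (\ref{y6}) the function reduces to the finite sum of exponentials
\begin{equation*}
F_{y_{6}}(t,n;\lambda ,p)=\frac{1}{n!}\sum_{k=0}^{n}\left(
\begin{array}{c}
n \\
k
\end{array}
\right) ^{p}\lambda ^{k}e^{tk}.
\end{equation*}
In particular $F_{y_{6}}$ is an entire function of $t$, so its Maclaurin expansion $\sum_{j=0}^{\infty }y_{6}(j,n;\lambda ,p)\frac{t^{j}}{j!}$ from (\ref{y6}) converges on all of $\mathbb{C}$ and may be differentiated term by term arbitrarily often.

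Next I would differentiate this expansion $m$ times with respect to $t$. Using $\frac{\partial ^{m}}{\partial t^{m}}\frac{t^{j}}{j!}=\frac{t^{j-m}}{(j-m)!}$ for $j\geq m$ and $0$ for $j<m$, termwise differentiation yields
\begin{equation*}
\frac{\partial ^{m}}{\partial t^{m}}F_{y_{6}}(t,n;\lambda ,p)=\sum_{j=m}^{\infty }y_{6}(j,n;\lambda ,p)\frac{t^{j-m}}{(j-m)!}.
\end{equation*}
Evaluating at $t=0$ then annihilates every term except the one with $j=m$, by the convention $0^{0}=1$ recorded in the Notations, which leaves precisely $y_{6}(m,n;\lambda ,p)$ and establishes (\ref{y6G}).

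The only analytic subtlety is the interchange of differentiation with the infinite summation, and this is exactly where I would expect the (very mild) obstacle to lie; but it is dissolved immediately by the termination of the hypergeometric series and the resulting entirety of $F_{y_{6}}$. If one prefers not to invoke termination, the same justification follows from the uniform convergence of the exponential generating series on compact subsets of $\mathbb{C}$, which again permits termwise differentiation. Consequently the argument is entirely routine once the reduction to a finite exponential sum is noted, and no genuine difficulty remains.
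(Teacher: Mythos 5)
Your proposal is correct and follows essentially the same route as the paper, which simply takes the $m$-th derivative of the generating function (\ref{y6}) with respect to $t$ and evaluates at $t=0$. Your additional justification of termwise differentiation via the termination of the hypergeometric series (so that $F_{y_{6}}$ is a finite sum of exponentials, hence entire) is a harmless elaboration of the same standard coefficient-extraction argument.
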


\begin{remark}
Substituting $\lambda =1$ into \textup{(\ref{y6b})}, we have the following
well-known  result: 
\begin{equation*}
n!y_{6}(m,n;1,p)=M_{m,p}(n)=\sum\limits_{k=0}^{n}\left( 
\begin{array}{c}
n \\ 
k%
\end{array}
\right) ^{p}k^{m}
\end{equation*}
(\textit{cf}. \textup{\cite[p. 159, Eq-(5.1.1)]{Moll2012})}; see also 
\textup{\cite{Cusick}}  and \textup{\cite{Golombek2}}. Substituting $\lambda
=1$ and $p=2$ \textup{(\ref{y6b})} and  \textup{(\ref{y6G})}, we have the
following well-known results: 
\begin{equation*}
n!y_{6}(m,n;1,2)=Q(n,m)=\frac{\partial ^{m}}{\partial t^{m}}\left\{
F_{y_{6}}(t,m,n;1,2)\right\} \left\vert _{t=0}\right. ,
\end{equation*}%
and  
\begin{equation*}
\sum\limits_{k=1}^{n}\left( 
\begin{array}{c}
n \\ 
k%
\end{array}
\right) ^{2}k^{m}=\frac{\partial ^{m}}{\partial t^{m}}\left\{
\sum\limits_{k=0}^{n}\left( 
\begin{array}{c}
n \\ 
k%
\end{array}
\right) ^{2}e^{tk}\right\} \left\vert _{t=0}\right.
\end{equation*}
(\textit{cf}. \textup{\cite{Golombek2}}).
\end{remark}

\begin{remark}
When $\lambda =1$, equation \textup{(\ref{y6b})} also reduces to the Moments
sums $ M_{m,p}(n)$ which were defined by Moll \textup{\cite[p. 167,
Eq-(5.3.1)]{Moll2012}} . We  also observe that many recurrences for powers
of binomials including $ M_{m,p}(n)$ given by Moll \textup{\cite[p. 172]%
{Moll2012}} .
\end{remark}

By using equation (\ref{y6b}), we represent the numbers $y_{6}(0,n;\lambda
,p)$ by the following hypergeometric function as follows:

\begin{corollary}
\begin{equation}
y_{6}(0,n;\lambda ,p)=\frac{1}{n!}\text{ }_{p}F_{p-1}\left[ 
\begin{array}{c}
-n,-n,...,-n \\ 
1,1,...,1%
\end{array}
;\left( -1\right) ^{p}\lambda \right] .  \label{y6bb}
\end{equation}
\end{corollary}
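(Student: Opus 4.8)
The plan is to obtain (\ref{y6bb}) as the $m=0$ instance of the explicit formula (\ref{y6b}) and then to match the resulting finite sum against the terminating hypergeometric series. First I would put $m=0$ in (\ref{y6b}). By the convention $0^{0}=1$ fixed in the Notations, we have $k^{0}=1$ for every $k\in\{0,1,\dots ,n\}$ (the case $k=0$ included), so that
\[
y_{6}(0,n;\lambda ,p)=\frac{1}{n!}\sum_{k=0}^{n}\binom{n}{k}^{p}\lambda ^{k}.
\]
It therefore suffices to show that $\sum_{k=0}^{n}\binom{n}{k}^{p}\lambda ^{k}$ equals the generalized hypergeometric function $_{p}F_{p-1}\bigl(-n,\dots ,-n;1,\dots ,1;(-1)^{p}\lambda \bigr)$.

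Next I would expand this $_{p}F_{p-1}$ directly from the series definition recorded in the Notations. With all $p$ numerator parameters equal to $-n$ and all $p-1$ denominator parameters equal to $1$, its $k$-th term is $\dfrac{\bigl((-n)_{k}\bigr)^{p}}{\bigl((1)_{k}\bigr)^{p-1}}\dfrac{\bigl((-1)^{p}\lambda \bigr)^{k}}{k!}$. I would then simplify the two Pochhammer factors: since $(1)_{k}=k!$, the full denominator collapses to $(k!)^{p}$; and applying the relation $(-\lambda )_{v}=(-1)^{v}\lambda (\lambda -1)\cdots (\lambda -v+1)$ with $\lambda =n$, $v=k$ gives $(-n)_{k}=(-1)^{k}\,n!/(n-k)!$ for $0\le k\le n$, while $(-n)_{k}=0$ for $k>n$. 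The vanishing of $(-n)_{k}$ beyond $k=n$ is what makes the series terminate, so the summation range $0\le k\le n$ agrees with that of the finite sum above.

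The one delicate point, and the step I would single out as the crux, is the sign accounting. Raising $(-n)_{k}$ to the $p$-th power produces a factor $(-1)^{kp}$, whereas the argument contributes $\bigl((-1)^{p}\lambda \bigr)^{k}=(-1)^{pk}\lambda ^{k}$; multiplying these gives $(-1)^{2pk}=1$. This cancellation is precisely the reason the factor $(-1)^{p}$ must be inserted into the argument of the hypergeometric function. With the signs gone, the $k$-th term reduces to $\bigl(n!/((n-k)!\,k!)\bigr)^{p}\lambda ^{k}=\binom{n}{k}^{p}\lambda ^{k}$, and summing over $0\le k\le n$ reproduces $\sum_{k=0}^{n}\binom{n}{k}^{p}\lambda ^{k}$. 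Dividing by $n!$ then yields (\ref{y6bb}).

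As a consistency check I would also note the shorter route: setting $t=0$ in the defining generating function (\ref{y6}) extracts the constant coefficient $y_{6}(0,n;\lambda ,p)=F_{y_{6}}(0,n;\lambda ,p)$, and since $e^{0}=1$ the argument $(-1)^{p}\lambda e^{t}$ becomes $(-1)^{p}\lambda $, giving (\ref{y6bb}) at once. The computation above is essentially the verification that (\ref{y6}) and the series form $\sum_{m}y_{6}(m,n;\lambda ,p)t^{m}/m!$ are consistent, so the two arguments are really the same identity viewed from opposite ends. No further obstacle arises; the remaining manipulations are routine.
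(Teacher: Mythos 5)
Your proposal is correct and follows exactly the route the paper indicates: specialize the explicit formula (\ref{y6b}) at $m=0$ (using the convention $0^{0}=1$) and identify the resulting finite sum with the terminating $_{p}F_{p-1}$ series via $(-n)_{k}=(-1)^{k}n!/(n-k)!$ and $(1)_{k}=k!$, the sign factor $(-1)^{kp}$ cancelling against $\left((-1)^{p}\right)^{k}$ from the argument. The paper states this step without detail, so your write-up simply supplies the verification the paper leaves implicit; your remark that setting $t=0$ in (\ref{y6}) gives the same identity is also consistent with the paper's construction.
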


\begin{remark}
Setting $\lambda =1$ and $p=2$ into \textup{(\ref{y6bb})}, we get the
following  well-known identity: 
\begin{equation*}
n!y_{6}(0,n;1,2)=\sum\limits_{k=0}^{n}\left( 
\begin{array}{c}
n \\ 
k%
\end{array}
\right) ^{2}=\text{ }_{2}F_{1}\left[ 
\begin{array}{c}
-n,-n \\ 
1%
\end{array}
;1\right] .
\end{equation*}
The above finite sum is obviously a particular case of the Chu-Vandermonde 
identity (cf. \textup{\cite[p. 37]{WolframKOEp}}). Setting $\lambda =1$ and $%
p=3$ and also replacing $n$ by $2n$  in \textup{(\ref{y6bb})}, we have 
\begin{eqnarray*}
\left( 2n\right) !y_{6}(0,2n;-1,3) &=&\sum\limits_{k=0}^{2n}(-1)^{k}\left( 
\begin{array}{c}
2n \\ 
k%
\end{array}
\right) ^{3}=\text{ }_{3}F_{2}\left[ 
\begin{array}{c}
-2n,-2n,-2n \\ 
1,1%
\end{array}
;1\right] \\
&=&(-1)^{n}\frac{(3n)!}{\left( n!\right) ^{3}}
\end{eqnarray*}
which is a special case of Dixon's identity (\textit{cf}. \textup{\cite[pp.
37-38, p. 64]{WolframKOEp}}).
\end{remark}

\begin{remark}
Substituting $\lambda =1$ into \textup{(\ref{y6b})}, Cusic \textup{\cite%
{Cusick}} gave a  recurrence relations for sum $S_{n,m}^{(p)}$: 
\begin{equation*}
S_{n,m}^{(p)}=n!y_{6}(m,n;1,p)=\sum\limits_{k=0}^{n}\left( 
\begin{array}{c}
n \\ 
k%
\end{array}
\right) ^{p}k^{m}=\sum\limits_{k=0}^{m}(-1)^{k}\left( 
\begin{array}{c}
m \\ 
k%
\end{array}
\right) n^{m-k}S_{n,k}^{(p)}
\end{equation*}
and 
\begin{equation*}
S_{n,p}^{(p)}=n^{p}S_{n,0}^{(p)},
\end{equation*}
where 
\begin{equation*}
S_{n,0}^{(p)}=n!y_{6}(0,n;1,p).
\end{equation*}
\end{remark}

\subsection{Generalization of the Franel numbers}

Here, we give generalization of the Franel numbers $F_{p}(m,n;\lambda )$. We
also give a relation between the Bernoulli numbers, the Stirling numbers of
the first kind, the Catalan numbers, the Daehee numbers and the numbers $%
y_{6}(m,n;\lambda ,p)$.

Generalization Franel numbers $F_{p}(m,n;\lambda )$ are defined by%
\begin{equation*}
F_{p}(m,n;\lambda )=n!y_{6}(m,n;\lambda ,p).
\end{equation*}%
Here, the number $F_{p}(m,n;\lambda )$ are also so-called \textit{%
generalized }$p$\textit{-th order Franel numbers}.

\begin{remark}
In 1894, Franel \textup{\cite{Franel}} gave a recurrence relation for $%
S_{n,0}^{(3)}$ (\textit{cf}. \textup{\cite{Cusick}, \cite{foat}, \cite%
{Perlstadt JNT}}).
\end{remark}

\begin{remark}
In \textup{\cite{Golombek2}}, we observe that  
\begin{equation*}
n!y_{6}(3,n;1,2)=n^{2}(n+1)\left( 
\begin{array}{c}
2n-3 \\ 
n-1%
\end{array}
\right)
\end{equation*}%
In \textup{\cite{Cusick}} and \textup{\cite{Askey}}, the  following
well-known relations were given: 
\begin{equation*}
v_{n}=n!y_{6}(0,n;1,3)=\sum\limits_{k=0}^{n}\left( 
\begin{array}{c}
n \\ 
k%
\end{array}
\right) ^{3}=\text{ }_{3}F_{2}\left[ 
\begin{array}{c}
-n,-n,-n \\ 
1,1%
\end{array}
;-1\right] ,
\end{equation*}
where the numbers $v_{n}$\ denote the well-known the \textbf{Franel numbers}
and also  
\begin{equation*}
u_{n}=n!y_{6}(0,n;1,4)=\sum\limits_{k=0}^{n}\left( 
\begin{array}{c}
n \\ 
k%
\end{array}
\right) ^{4}=\text{ }_{4}F_{3}\left[ 
\begin{array}{c}
-n,-n,-n,-n \\ 
1,1,1%
\end{array}
;1\right],
\end{equation*}
(\textit{cf}. \textup{\cite{Cusick}, \cite{Franel}, \cite{Franel2}, \cite%
{Moll2012}}).
\end{remark}

Substituting $m=0$, $\lambda =1$ and $p=2$ into (\ref{y6b}) yields 
\begin{equation*}
y_{6}(0,n;1,2)=\frac{n+1}{n!}C_{n},
\end{equation*}%
where $C_{n}$ denotes the Catalan numbers, which count the number of ways to
place parentheses to group symbols in a sequence of numbers (\textit{cf}. 
\cite{Moll2012}). We modify the above equation as follows%
\begin{equation}
y_{6}(0,n;1,2)=(-1)^{n}\frac{C_{n}}{D_{n}}  \label{CN}
\end{equation}%
and $D_{n}$ denotes the Daehee numbers (\textit{cf}. \cite{DSkim1}).
Combining (\ref{CN}) with the following well-known relation%
\begin{equation*}
D_{n}=\sum\limits_{k=0}^{n}B_{k}s(n,k),
\end{equation*}%
we arrive at the following theorem:

\begin{theorem}
\begin{equation*}
C_{n}=(-1)^{n}y_{6}(0,n;1,2)\sum\limits_{k=0}^{n}B_{k}s(n,k).
\end{equation*}
\end{theorem}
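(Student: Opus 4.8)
The plan is to combine the two displayed facts that immediately precede the statement, so that the theorem follows by a one-line rearrangement plus a substitution. First I would start from the evaluation (\ref{CN}), namely $y_{6}(0,n;1,2)=(-1)^{n}C_{n}/D_{n}$, where $C_{n}$ are the Catalan numbers and $D_{n}$ the Daehee numbers. This identity is itself the specialization of the explicit formula (\ref{y6b}) at $m=0$, $\lambda=1$, $p=2$, which gives $y_{6}(0,n;1,2)=\frac{1}{n!}\sum_{k=0}^{n}\binom{n}{k}^{2}=\frac{1}{n!}\binom{2n}{n}=\frac{n+1}{n!}C_{n}$, rewritten in the Catalan/Daehee form stated in (\ref{CN}).

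Next I would solve (\ref{CN}) for $C_{n}$. Multiplying both sides by $(-1)^{n}D_{n}$ and using $(-1)^{2n}=1$ yields $C_{n}=(-1)^{n}y_{6}(0,n;1,2)\,D_{n}$. Here one may harmlessly assume $D_{n}\neq 0$, which holds since $D_{n}=(-1)^{n}n!/(n+1)$, so the rearrangement is legitimate for every $n\in\mathbb{N}_{0}$.

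Finally, I would substitute the stated closed form $D_{n}=\sum_{k=0}^{n}B_{k}s(n,k)$ — the expansion of the Daehee numbers in terms of the Bernoulli numbers $B_{k}$ and the Stirling numbers of the first kind $s(n,k)$ — into the previous line. This produces exactly
\begin{equation*}
C_{n}=(-1)^{n}y_{6}(0,n;1,2)\sum_{k=0}^{n}B_{k}s(n,k),
\end{equation*}
which is the claimed identity.

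I expect no genuine obstacle here: the result is a direct algebraic consequence of (\ref{CN}) together with the Daehee-number expansion, both of which are already established above. The only step that would, in a self-contained treatment, deserve a short justification is the passage from (\ref{y6b}) to (\ref{CN}) — i.e. checking that $\frac{1}{n!}\binom{2n}{n}$ coincides with $(-1)^{n}C_{n}/D_{n}$ — but since (\ref{CN}) is taken as given, the theorem follows immediately by substitution.
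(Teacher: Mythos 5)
Your proposal is correct and follows exactly the paper's own route: it takes the identity $y_{6}(0,n;1,2)=(-1)^{n}C_{n}/D_{n}$ from (\ref{CN}), solves for $C_{n}$, and substitutes the well-known expansion $D_{n}=\sum_{k=0}^{n}B_{k}s(n,k)$. Your added remarks (the derivation of (\ref{CN}) from (\ref{y6b}) via $\frac{1}{n!}\binom{2n}{n}=\frac{n+1}{n!}C_{n}$, and the observation that $D_{n}=(-1)^{n}n!/(n+1)\neq 0$ justifies the rearrangement) only make the argument more self-contained than the paper's.
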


\begin{remark}
Substituting $p=1$ into \textup{(\ref{y6b})}, we have 
\begin{equation*}
y_{6}(m,n;\lambda ,1)=y_{1}(m,n;\lambda )
\end{equation*}
(\textit{cf}. \textup{\cite{SimsekAADM}}). Substituting $p=1$ and $\lambda =1
$ into \textup{(\ref{y6b})}, we have 
\begin{equation*}
B(m,n)=n!y_{6}(m,n;1,1)
\end{equation*}
(\textit{cf}. \textup{\cite{Boros Moll}, \cite{golombek}, \cite{Golombek2}, 
\cite{Moll2012}, \cite{SimsekAADM}}).
\end{remark}

Substituting $\lambda =-1$ into (\ref{y6b}), we have generalized the $p$-th
order alterne Franel numbers as follows:%
\begin{equation}
y_{6}(m,n;-1,p)=\frac{1}{n!}\sum\limits_{k=0}^{n}(-1)^{k}\left( 
\begin{array}{c}
n \\ 
k%
\end{array}
\right) ^{p}k^{m}.  \label{Y6.A}
\end{equation}%
Substituting $m=0$ into the above equation, we have%
\begin{equation}
y_{6}(0,n;-1,p)=\frac{1}{n!}\sum\limits_{k=0}^{n}(-1)^{k}\left( 
\begin{array}{c}
n \\ 
k%
\end{array}
\right) ^{p}.  \label{Y6B}
\end{equation}

\begin{remark}
Substituting $\lambda =1$ into \textup{(\ref{y6b})}, we have the following
power sums 
\begin{equation*}
L_{j}(n)=n!y_{6}(0,n;1,j)
\end{equation*}
(\textit{cf}. \textup{\cite[p. 160, Eq-(5.2.1)]{Moll2012}}). Moll gave 
\textup{\cite{Moll2012}}  recurrence relations for $L_{j}(n)$. Castro et al. 
\textup{\cite{Castro}} also studied  power sums of $L_{j}(n)$ which
so-called the $j$-th order Franel numbers and also the $j$ -th order alterne
Franel numbers, given as follows:  
\begin{equation*}
n!y_{6}(0,n;-1,j).
\end{equation*}
\end{remark}

\begin{remark}
Setting $p=2$ and $p=3$ into \textup{(\ref{Y6B})}, we have the following
well-known  identities, respectively: 
\begin{equation*}
n!y_{6}(0,n;-1,2)=\left\{ 
\begin{array}{cc}
0, & n\text{ odd} \\ 
\frac{(-1)^{\frac{n}{2}}n!}{\left( \left( \frac{n}{2}\right) !\right) ^{2}},
& \text{otherwise,}%
\end{array}
\right.
\end{equation*}
\begin{equation}
n!y_{6}(0,n;-1,2)=\frac{\sqrt{\pi }2^{n}}{\Gamma \left( \frac{2+n}{2}\right)
\Gamma \left( \frac{1-n}{2}\right) },  \label{AWolf}
\end{equation}
(\textit{cf}. \textup{\cite[p. 29, Exercise 2.7]{WolframKOEp}}), and 
\begin{equation*}
n!y_{6}(0,n;-1,3)=\left\{ 
\begin{array}{cc}
0, & n\text{ odd} \\ 
\frac{(-1)^{\frac{n}{2}}\left( \frac{3n}{2}\right) !}{\left( \left( \frac{n}{
2}\right) !\right) ^{3}}, & \text{otherwise}%
\end{array}
\right.
\end{equation*}
(\textit{cf}. \textup{\cite[p. 11]{WolframKOEp}}).
\end{remark}

\subsection{A relation between the numbers $y_{6}(m,n;\protect\lambda ,p)$,
Euler, Stirling and Changhee numbers, and the Legendre polynomials}

Here we give relations between the numbers $y_{6}(m,n;\lambda ,p)$ and the
Euler numbers, Stirling numbers and Changhee numbers, and also the Legendre
polynomials.

The Legendre polynomials $P_{n}\left( x\right) $, orthogonal in $\left[ -1,1%
\right] $, are defined by%
\begin{equation*}
g\left( t,x\right) =\frac{1}{\sqrt{1-2xt+t^{2}}}=\sum\limits_{n=0}^{\infty
}P_{n}\left( x\right) t^{n}
\end{equation*}%
where%
\begin{equation*}
P_{n}\left( x\right) =\frac{1}{2^{n}}\sum\limits_{k=0}^{n}\left( 
\begin{array}{c}
n \\ 
k%
\end{array}
\right) ^{2}\left( x-1\right) ^{n-k}\left( x+1\right) ^{k}
\end{equation*}%
(\textit{c}f. \cite{Moll2012}, \cite[p. xiv Eq-(2) and p. 73]{WolframKOEp}).

When $x=0$, we get the following identities%
\begin{eqnarray*}
P_{n}\left( 0\right) &=&\frac{\left( -1\right) ^{n}n!}{2^{n}}y_{6}(0,n;-1,2),
\\
P_{n}\left( 2\right) &=&\frac{n!}{2^{n}}y_{6}(0,n;3,2),
\end{eqnarray*}%
\begin{equation*}
P_{n}\left( 2\right) =-Y_{n}(-1)y_{6}(0,n;3,2),
\end{equation*}%
where the numbers $Y_{n}(\lambda )$ are given in , and%
\begin{equation*}
P_{n}\left( 0\right) =Ch_{n}y_{6}(0,n;-1,2),
\end{equation*}%
where $Ch_{n}$ denotes the Changhee numbers (\textit{cf}. \cite{DsKim2}).
Since%
\begin{equation*}
Ch_{n}=\frac{\left( -1\right) ^{n}n!}{2^{n}}=\sum
\limits_{k=0}^{n}s(n,k)E_{k},
\end{equation*}%
we get the following theorem:

\begin{theorem}
\begin{equation*}
P_{n}\left( 0\right) =y_{6}(0,n;-1,2)\sum\limits_{k=0}^{n}s(n,k)E_{k}.
\end{equation*}
\end{theorem}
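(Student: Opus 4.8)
The plan is to assemble the claim from two relations already recorded in the excerpt, so the argument is in essence a chain of substitutions rather than a fresh computation. First I would re-derive the value of the Legendre polynomial at the origin. Starting from the explicit formula $P_{n}(x)=2^{-n}\sum_{k=0}^{n}\binom{n}{k}^{2}(x-1)^{n-k}(x+1)^{k}$ and setting $x=0$, every factor $(x+1)^{k}$ collapses to $1$ while $(x-1)^{n-k}$ becomes $(-1)^{n-k}$, giving $P_{n}(0)=2^{-n}(-1)^{n}\sum_{k=0}^{n}(-1)^{k}\binom{n}{k}^{2}$. Recognizing the inner alternating sum through equation (\ref{Y6B}) as $n!\,y_{6}(0,n;-1,2)$, this produces the clean identity $P_{n}(0)=\frac{(-1)^{n}n!}{2^{n}}\,y_{6}(0,n;-1,2)$, exactly as stated above the theorem.

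Next I would identify the scalar prefactor with the Changhee number. The closed form $Ch_{n}=\frac{(-1)^{n}n!}{2^{n}}$ (read off from the generating function $\frac{2}{2+t}=\sum_{n\geq 0}Ch_{n}\frac{t^{n}}{n!}$) lets me rewrite the previous line as $P_{n}(0)=Ch_{n}\,y_{6}(0,n;-1,2)$. Finally, substituting the expansion $Ch_{n}=\sum_{k=0}^{n}s(n,k)E_{k}$, which expresses the Changhee numbers as the first-kind Stirling transform of the Euler numbers, into this identity yields $P_{n}(0)=y_{6}(0,n;-1,2)\sum_{k=0}^{n}s(n,k)E_{k}$, which is the assertion.

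There is no genuine obstacle in the main line of reasoning: each ingredient (the evaluation $P_{n}(0)=\frac{(-1)^{n}n!}{2^{n}}y_{6}(0,n;-1,2)$, the closed form of $Ch_{n}$, and its Stirling expansion) is already supplied just before the statement. The only step that merits a verification rather than a bare citation is the identity $Ch_{n}=\sum_{k=0}^{n}s(n,k)E_{k}$. If I wanted to establish it in place, the cleanest route is a generating-function comparison: under the substitution $t=e^{u}-1$, so that $u=\log(1+t)$, one has $\frac{2}{2+t}=\frac{2}{e^{u}+1}=\sum_{k}E_{k}\frac{u^{k}}{k!}$, and re-expanding each power $u^{k}=(\log(1+t))^{k}$ via the generating function (\ref{S1}) for the Stirling numbers of the first kind gives $\frac{2}{2+t}=\sum_{n}\bigl(\sum_{k=0}^{n}s(n,k)E_{k}\bigr)\frac{t^{n}}{n!}$. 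Matching the coefficients of $\frac{t^{n}}{n!}$ against $\sum_{n}Ch_{n}\frac{t^{n}}{n!}$ then confirms the needed identity and closes the chain.
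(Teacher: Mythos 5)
Your argument is correct and is essentially identical to the paper's: evaluate $P_{n}(0)$ from the explicit Legendre formula to get $P_{n}(0)=\frac{(-1)^{n}n!}{2^{n}}y_{6}(0,n;-1,2)$, identify the prefactor as $Ch_{n}$, and replace it by $\sum_{k=0}^{n}s(n,k)E_{k}$. The only addition is your generating-function verification of $Ch_{n}=\sum_{k=0}^{n}s(n,k)E_{k}$ via the substitution $t=e^{u}-1$, which the paper simply cites; that check is sound and harmless.
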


\section{A new family of Polynomials $P(x;m,n;\protect\lambda ,p)$}

In this section we define a new family of special polynomials whose
coefficients are the numbers $y_{6}(m,n;\lambda ,p)$. We give some
properties of these polynomials.

The polynomials $P(x;m,n;\lambda ,p)$ are defined by means of the following
generating function:%
\begin{eqnarray*}
G(t,x;\lambda ,p) &=&\frac{1}{n!}\text{ }_{p}F_{p-1}\left[ 
\begin{array}{c}
-n,-n,...,-n \\ 
1,1,...,1%
\end{array}
;\left( -1\right) ^{p}\lambda e^{t}\right] \text{ }_{p}F_{p}\left[ 
\begin{array}{c}
1,1,...,1 \\ 
1,1,...,1%
\end{array}
;xt\right] \\
&=&\sum_{m=0}^{\infty }P(x;m,n;\lambda ,p)\frac{t^{m}}{m!}
\end{eqnarray*}%
where $n,p\in \mathbb{N}$ and $\lambda \in \mathbb{R}$ or $\mathbb{C}$.

From the above equation, we also get%
\begin{equation}
G(t,x;\lambda ,m,p)=e^{tx}F_{y_{6}}(t,m;\lambda ,p).  \label{py6}
\end{equation}%
By (\ref{py6}), we get%
\begin{equation*}
\sum_{m=0}^{\infty }P(x;m,n;\lambda ,p)\frac{t^{m}}{m!}=\sum_{m=0}^{\infty
}x^{m}\frac{t^{m}}{m!}\sum_{m=0}^{\infty }y_{6}(m,n;\lambda ,p)\frac{t^{m}}{
m!}.
\end{equation*}%
Therefore%
\begin{equation*}
\sum_{m=0}^{\infty }P(x;m,n;\lambda ,p)\frac{t^{m}}{m!}=\sum_{m=0}^{\infty
}\sum\limits_{k=0}^{m}\left( 
\begin{array}{c}
m \\ 
k%
\end{array}
\right) x^{m-k}y_{6}(k,n;\lambda ,p)\frac{t^{m}}{m!}
\end{equation*}%
Comparing the coefficients of $\frac{t^{m}}{m!}$ on both sides of the above
equation, we arrive at the following theorem:

\begin{theorem}
Let $m,n,p\in \mathbb{N}_{0}$ and $\lambda\in \mathbb{R}$ or $\mathbb{C}$.
Then we have{\large \ 
\begin{equation}
P(x;m,n;\lambda ,p)=\sum\limits_{k=0}^{m}\left( 
\begin{array}{c}
m \\ 
k%
\end{array}
\right) x^{m-k}y_{6}(k,n;\lambda ,p).  \label{Yp1}
\end{equation}
}
\end{theorem}

By (\ref{py6}), we also obtain another formula for the polynomials $%
P(x;m,n;\lambda ,p)$, which is given by (\ref{Yp2}).

Combining (\ref{Yp1}) with (\ref{Yp2}), we get the following combinatorial
sum:

\begin{corollary}
\begin{equation*}
\sum\limits_{k=0}^{m}\left( 
\begin{array}{c}
m \\ 
k%
\end{array}
\right) x^{m-k}y_{6}(k,n;\lambda ,p)=\sum\limits_{j=0}^{n}\left( 
\begin{array}{c}
n \\ 
j%
\end{array}
\right) ^{p}\lambda ^{j}\left( x+j\right) ^{m}.
\end{equation*}
\end{corollary}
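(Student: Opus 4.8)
The plan is to read the identity as the equality of two formulas for the same polynomial $P(x;m,n;\lambda,p)$ that have already been produced: the left-hand side is precisely the expansion \eqref{Yp1} furnished by the theorem above, while the right-hand side is the closed form \eqref{Yp2} read off from the generating function \eqref{py6}, $G(t,x;\lambda,p)=e^{tx}F_{y_6}(t,m;\lambda,p)$. Thus at the coarsest level the corollary is immediate: both sides equal $P(x;m,n;\lambda,p)$, hence they equal each other. To make the argument self-contained, however, I would rather pass directly from the left side to the right side using only the explicit formula \eqref{y6b} for the numbers $y_6$ together with the binomial theorem, so that the statement is verified without having to invoke the coefficient comparison twice.

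Concretely, I would first substitute the explicit formula \eqref{y6b}, namely $y_6(k,n;\lambda,p)=\frac{1}{n!}\sum_{j=0}^n \binom{n}{j}^p j^k \lambda^j$, into each summand of the left-hand side. This turns the single sum into a double finite sum in the indices $k$ (ranging from $0$ to $m$) and $j$ (ranging from $0$ to $n$). Since both index ranges are finite, I may freely interchange the order of summation, bringing the sum over $j$ to the outside and factoring the $j$-dependent quantity $\binom{n}{j}^p \lambda^j$ out of the inner sum over $k$.

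The decisive step is then to recognise the inner sum over $k$ as a binomial expansion, $\sum_{k=0}^m \binom{m}{k} x^{m-k} j^k = (x+j)^m$, applied with $x$ and the integer $j$. Collapsing it this way leaves the sum over $j$ of $\binom{n}{j}^p \lambda^j (x+j)^m$, which is exactly the right-hand side of the asserted identity. I expect no genuine obstacle in this argument: the interchange of summations is trivially legitimate because both sums are finite, so the only point demanding real care is the consistent handling of the normalising factor $1/n!$ carried by the numbers $y_6(k,n;\lambda,p)$, which must be tracked through both representations of $P(x;m,n;\lambda,p)$ so that the two sides line up exactly.
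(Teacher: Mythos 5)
Your reading of the paper's intent is accurate: the paper proves this corollary in one line, simply by observing that (\ref{Yp1}) and (\ref{Yp2}) are two expressions for the same polynomial $P(x;m,n;\lambda ,p)$, both extracted from the generating function (\ref{py6}). Your direct verification---substituting the explicit formula (\ref{y6b}), interchanging the two finite sums, and collapsing the inner sum by the binomial theorem $\sum_{k=0}^{m}\binom{m}{k}x^{m-k}j^{k}=(x+j)^{m}$---is a more elementary and self-contained route that bypasses the generating function entirely, and it is the right way to check the identity independently.

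However, the one point you flag, the handling of the factor $1/n!$, is not actually resolved in your argument, and carrying the computation out shows why it cannot be waved away. Substituting (\ref{y6b}) into the left-hand side gives
\begin{equation*}
\sum\limits_{k=0}^{m}\left(
\begin{array}{c}
m \\
k
\end{array}
\right) x^{m-k}y_{6}(k,n;\lambda ,p)=\frac{1}{n!}\sum\limits_{j=0}^{n}\left(
\begin{array}{c}
n \\
j
\end{array}
\right) ^{p}\lambda ^{j}\left( x+j\right) ^{m},
\end{equation*}
which is $\frac{1}{n!}$ times the displayed right-hand side, not the right-hand side itself. The discrepancy is inherited from the paper rather than introduced by you: the generating function (\ref{py6}) carries the normalisation $\frac{1}{n!}$ coming from (\ref{y6a}), whereas the closed form (\ref{Yp2}) as printed in the introduction omits it, so (\ref{Yp1}) and (\ref{Yp2}) are not literally consistent and the corollary as stated is off by the factor $n!$. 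Your method is correct and in fact exposes this; but your concluding claim that the collapsed sum is ``exactly the right-hand side'' is not justified as written, and a complete proof must either restore the $\frac{1}{n!}$ on the right-hand side (equivalently, multiply the left-hand side by $n!$) or redefine $P(x;m,n;\lambda ,p)$ consistently.
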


\subsection{Derivative formula and recurrence relation}

In this section we are going to differentiate (\ref{py6}) with respect to $t$
and $x$ to derive a derivative formula and recurrence relations for the
polynomials $P(x;m,n;\lambda ,p)$.

We give higher-order derivatives formula of the polynomials $P(x;m,n;\lambda
,p)$, with respect to $x$ of equation (\ref{py6}), by the following partial
differential equation:%
\begin{equation*}
\frac{\partial ^{k}}{\partial x^{k}}\left\{ G(t,x;\lambda ,m,p)\right\}
=t^{k}G(t,x;\lambda ,m,p).
\end{equation*}%
By using the above equation, we obtain%
\begin{equation*}
\sum_{m=0}^{\infty }\frac{\partial ^{k}}{\partial x^{k}}\left\{
P(x;m,n;\lambda ,p)\right\} \frac{t^{m}}{m!}=\sum_{m=0}^{\infty }(n)^{ 
\underline{k}}P(x;m-k,n;\lambda ,p)\frac{t^{m}}{m!}.
\end{equation*}%
Comparing the coefficients of $\frac{t^{m}}{m!}$ on both sides of the above
equation, we arrive at the following theorem:

\begin{theorem}
Let $0<k\leq m$. Then we have 
\begin{equation}
\frac{\partial ^{k}}{\partial x^{k}}\left\{ P(x;m,n;\lambda ,p)\right\}
=(n)^{\underline{k}}P(x;m-k,n;\lambda ,p).  \label{py6a}
\end{equation}
\end{theorem}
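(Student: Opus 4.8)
The plan is to obtain (\ref{py6a}) by reading off Taylor coefficients from the generating function (\ref{py6}), following the set-up already in place just above the statement. The crucial structural fact is that in $G(t,x;\lambda,m,p)=e^{tx}F_{y_{6}}(t,m;\lambda,p)$ the dependence on $x$ is confined entirely to the exponential factor $e^{tx}$, while $F_{y_{6}}(t,m;\lambda,p)$ is independent of $x$. This makes repeated $x$-differentiation of $G$ completely transparent and reduces the whole statement to a bookkeeping comparison of coefficients.

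First I would differentiate $G$ with respect to $x$ a total of $k$ times. Since $\partial_x e^{tx}=t\,e^{tx}$, each differentiation simply multiplies $G$ by $t$, giving the partial differential equation $\frac{\partial^{k}}{\partial x^{k}}G(t,x;\lambda,m,p)=t^{k}\,G(t,x;\lambda,m,p)$ recorded above. This identity is the engine of the proof: it transfers the action of $\partial_x^k$ into multiplication by $t^k$ on the generating-function side, so that no genuine differentiation of the coefficients $P(x;m,n;\lambda,p)$ is needed beyond what the series already encodes.

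Next I would insert the defining series $G=\sum_{m=0}^{\infty}P(x;m,n;\lambda,p)\,t^{m}/m!$ into both sides of $\partial_x^{k}G=t^{k}G$. On the left, the $x$-derivative passes through the series (which is entire in $t$, since $F_{y_{6}}$ is a terminating hypergeometric composed with $e^{t}$) to produce $\sum_{m=0}^{\infty}\frac{\partial^{k}}{\partial x^{k}}P(x;m,n;\lambda,p)\,t^{m}/m!$. On the right, multiplying the series by $t^{k}$ and shifting the summation index by $k$ produces exactly the expansion displayed above the statement, in which the falling-factorial prefactor multiplies $P(x;m-k,n;\lambda,p)$; the hypothesis $0<k\leq m$ ensures that only the terms with $m\geq k$ contribute and that the shifted index $m-k$ remains a nonnegative integer. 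Equating the coefficients of $t^{m}/m!$ in the two resulting series then yields (\ref{py6a}).

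I expect the one point requiring care to be the index-shift bookkeeping on the right-hand side: the factor $t^{k}$ has to be absorbed correctly against the $1/m!$ normalisation of the series so that the falling-factorial prefactor recorded in the displayed expansion emerges, and one must check that the lower limit of summation is consistent with the restriction $0<k\leq m$ so that no spurious terms with $m<k$ appear. Apart from that shift, the argument is a direct coefficient comparison driven by the relation $\partial_x^{k}G=t^{k}G$ and involves no further analytic input.
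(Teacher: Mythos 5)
You take the same route as the paper: derive the relation $\frac{\partial^{k}}{\partial x^{k}}G(t,x;\lambda,m,p)=t^{k}G(t,x;\lambda,m,p)$ from (\ref{py6}) and compare coefficients of $\frac{t^{m}}{m!}$. But there is a genuine gap, and it sits precisely at the one step you declared delicate and then did not carry out. Performing the index shift explicitly gives
\[
t^{k}\sum_{m=0}^{\infty}P(x;m,n;\lambda,p)\frac{t^{m}}{m!}
=\sum_{m=k}^{\infty}\frac{m!}{(m-k)!}\,P(x;m-k,n;\lambda,p)\frac{t^{m}}{m!},
\]
so the prefactor that actually emerges is the falling factorial $(m)^{\underline{k}}=m(m-1)\cdots(m-k+1)$ of the series index $m$, not $(n)^{\underline{k}}$. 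Your claim that the shift ``produces exactly the expansion displayed above the statement'' is therefore false: that displayed expansion (and the theorem as printed, and the $k=1$ specialization $nP(x;m-1,n;\lambda,p)$ that follows it) carries a typo, $n$ in place of $m$. By deferring to the printed display rather than computing, your proposal inherits the error instead of exposing it.

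The cross-check against the explicit formula (\ref{Yp2}) settles the matter with no generating functions at all: since $\frac{\partial^{k}}{\partial x^{k}}(x+j)^{m}=(m)^{\underline{k}}(x+j)^{m-k}$, one gets
\[
\frac{\partial^{k}}{\partial x^{k}}\left\{P(x;m,n;\lambda,p)\right\}=(m)^{\underline{k}}\,P(x;m-k,n;\lambda,p),
\]
and the prefactor cannot depend on $n$. A concrete counterexample to the statement as printed: for $m=k=1$, $n=2$, $p=1$, $\lambda=1$ one has $P(x;1,2;1,1)=4x+4$, whose $x$-derivative is $4=1\cdot P(x;0,2;1,1)$, whereas $(n)^{\underline{1}}P(x;0,2;1,1)=8$. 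So your method is the right one and does prove the corrected identity with $(m)^{\underline{k}}$; but as written, your proposal asserts that it arrives at a formula which the computation does not, and cannot, yield.
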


Substituting $k=1$ into (\ref{py6a}), we get%
\begin{equation*}
\frac{\partial }{\partial x}\left\{ P(x;m,n;\lambda ,p)\right\}
=nP(x;m-1,n;\lambda ,p).
\end{equation*}

Combining (\ref{y6a}) with (\ref{py6}), we get%
\begin{equation*}
G(t,x;\lambda ,m,p)=\frac{1}{n!}\sum\limits_{k=0}^{n}\left( 
\begin{array}{c}
n \\ 
k%
\end{array}
\right) ^{p}\lambda ^{k}e^{t\left( k+x\right) }.
\end{equation*}%
In order to derive recurrence relation for the polynomials $P(x;m,n;\lambda
,p)$, we give the following partial differential equation of equation (\ref%
{py6}), with respect to $t$:%
\begin{equation*}
\frac{\partial }{\partial t}\left\{ G(t,x;\lambda ,m,p)\right\} =\frac{1}{n!}
\sum\limits_{k=0}^{n}\left( 
\begin{array}{c}
n \\ 
k%
\end{array}
\right) ^{p}\lambda ^{k}\left( k+x\right) e^{t\left( k+x\right) }
\end{equation*}%
Therefore%
\begin{eqnarray*}
\sum_{m=0}^{\infty }P(x;m+1,n;\lambda ,p)\frac{t^{m}}{m!} &=&\sum_{m=0}^{
\infty }\sum\limits_{j=0}^{m}\left( 
\begin{array}{c}
m \\ 
j%
\end{array}
\right) x^{m-j}y_{6}\left( j+1,n;\lambda ;p\right) \frac{t^{m}}{m!} \\
&&+x\sum_{m=0}^{\infty }P(x;m,n;\lambda ,p)\frac{t^{m}}{m!}
\end{eqnarray*}

Comparing the coefficients of $\frac{t^{m}}{m!}$ on both sides of the above
equation, we arrive at the following theorem:

\begin{theorem}
\begin{equation}
P(x;m+1,n;\lambda ,p)-xP(x;m,n;\lambda ,p)=\sum\limits_{j=0}^{m}\left( 
\begin{array}{c}
m \\ 
j%
\end{array}
\right) x^{m-j}y_{6}\left( j+1,n;\lambda ;p\right).  \label{py6ab}
\end{equation}
\end{theorem}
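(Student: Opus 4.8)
The plan is to differentiate the product representation of the generating function with respect to $t$ and then compare power-series coefficients. Starting from the representation $G(t,x;\lambda,m,p)=\frac{1}{n!}\sum_{k=0}^{n}\binom{n}{k}^{p}\lambda^{k}e^{t(k+x)}$ obtained by combining (\ref{y6a}) with (\ref{py6}), I would differentiate once in $t$ to get $\frac{\partial}{\partial t}\{G\}=\frac{1}{n!}\sum_{k=0}^{n}\binom{n}{k}^{p}\lambda^{k}(k+x)e^{t(k+x)}$, exactly the partial differential equation displayed just before the statement, and split the factor $(k+x)$ additively into a ``$k$-part'' and an ``$x$-part.''

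The $x$-part immediately reproduces $x\,G(t,x;\lambda,m,p)$, whose series expansion is $x\sum_{m\ge 0}P(x;m,n;\lambda,p)\frac{t^{m}}{m!}$. For the $k$-part, I would factor out $e^{tx}$ and write the remainder as $e^{tx}\cdot\frac{1}{n!}\sum_{k=0}^{n}\binom{n}{k}^{p}\lambda^{k}k\,e^{tk}$. Expanding $k\,e^{tk}$ through the Taylor series $e^{tk}=\sum_{j\ge 0}k^{j}t^{j}/j!$ raises each power of $k$ by one, so the coefficient of $t^{j}/j!$ in the bracketed sum becomes $\frac{1}{n!}\sum_{k=0}^{n}\binom{n}{k}^{p}\lambda^{k}k^{j+1}$, which is precisely $y_{6}(j+1,n;\lambda,p)$ by the explicit formula (\ref{y6b}). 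Thus the $k$-part equals $e^{tx}\sum_{j\ge 0}y_{6}(j+1,n;\lambda,p)\frac{t^{j}}{j!}$.

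At this stage I would apply the Cauchy product with $e^{tx}=\sum_{i\ge 0}x^{i}t^{i}/i!$ to collect the coefficient of $t^{m}/m!$ in the $k$-part as $\sum_{j=0}^{m}\binom{m}{j}x^{m-j}y_{6}(j+1,n;\lambda,p)$. On the left-hand side, termwise differentiation of $\sum_{m\ge 0}P(x;m,n;\lambda,p)\frac{t^{m}}{m!}$ together with an index shift gives $\frac{\partial}{\partial t}\{G\}=\sum_{m\ge 0}P(x;m+1,n;\lambda,p)\frac{t^{m}}{m!}$. Equating the two series and comparing coefficients of $t^{m}/m!$ then yields $P(x;m+1,n;\lambda,p)=\sum_{j=0}^{m}\binom{m}{j}x^{m-j}y_{6}(j+1,n;\lambda,p)+x\,P(x;m,n;\lambda,p)$, which rearranges to the claimed identity (\ref{py6ab}).

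The only delicate point is recognizing that weighting the summand by $k$ shifts the first argument of $y_{6}$ upward, replacing the moment index $m$ by $m+1$ in (\ref{y6b}); everything else is routine term-by-term manipulation of the series. These manipulations are legitimate because the inner sums over $k$ are finite, so the generating series are entire in $t$ for any real or complex $\lambda$, and differentiation and rearrangement are unconditionally justified. I therefore expect no genuine obstacle beyond keeping the Cauchy-product bookkeeping and the differentiation index shift mutually consistent.
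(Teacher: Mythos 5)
Your proposal is correct and follows essentially the same route as the paper: differentiating $G(t,x;\lambda,m,p)=\frac{1}{n!}\sum_{k=0}^{n}\binom{n}{k}^{p}\lambda^{k}e^{t(k+x)}$ in $t$, splitting the factor $(k+x)$ into the $x$-part (giving $xG$) and the $k$-part (giving, via the Cauchy product with $e^{tx}$, the sum $\sum_{j=0}^{m}\binom{m}{j}x^{m-j}y_{6}(j+1,n;\lambda,p)$), and then comparing coefficients of $t^{m}/m!$. The bookkeeping details you flag are handled identically in the paper, so no further comment is needed.
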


Here we note that it may be possible to express the relation on the right
side of equation (\ref{py6ab}) with derivative of the polynomials $%
P(x;m,n;\lambda ,p)$

\subsection{Integrals of the polynomials $P(x;m,n;\protect\lambda ,p)$}

Here we give not only the Riemann integral, but also $p$-adic integrals of
the polynomials $P(x;m,n;\lambda ,p)$.

\subsubsection{Riemann integral of the polynomials $P(x;m,n;\protect\lambda %
,p)$}

Here, we give some identities with aid of the Riemann integral of the
polynomials $P(x;m,n;\lambda ,p)$.

Integrating both sides of (\ref{Yp1}) and (\ref{Yp2}) with respect to $x$
yields, respectively,%
\begin{equation}
\int\limits_{0}^{1}P(x;m,n;\lambda ,p)dx=\sum\limits_{k=0}^{m}\left( 
\begin{array}{c}
m \\ 
k%
\end{array}
\right) \frac{y_{6}(k,n;\lambda ,p)}{m-k+1}  \label{inP-1}
\end{equation}%
and%
\begin{equation}
\int\limits_{0}^{1}P(x;m,n;\lambda ,p)dx=\sum\limits_{j=0}^{n}\left( 
\begin{array}{c}
n \\ 
j%
\end{array}
\right) ^{p}\lambda ^{j}\frac{\left( 1+j\right) ^{m}-j^{m}}{m+1}.
\label{inP2}
\end{equation}

Combining (\ref{inP-1}) and (\ref{inP2}), we arrive at the following theorem:

\begin{theorem}
\begin{equation}
\sum\limits_{k=0}^{m}\left( 
\begin{array}{c}
m \\ 
k%
\end{array}
\right) \frac{y_{6}(k,n;\lambda ,p)}{m-k+1}=\sum\limits_{j=0}^{n}\left( 
\begin{array}{c}
n \\ 
j%
\end{array}
\right) ^{p}\lambda ^{j}\frac{\left( 1+j\right) ^{m}-j^{m}}{m+1}.
\label{inP8}
\end{equation}
\end{theorem}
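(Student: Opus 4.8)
The plan is to evaluate the single definite integral $\int_{0}^{1}P(x;m,n;\lambda ,p)\,dx$ in two different ways, exploiting the fact that the two closed forms (\ref{Yp1}) and (\ref{Yp2}) express the same polynomial $P(x;m,n;\lambda ,p)$. Since each representation is a finite linear combination of monomials in $x$, the integration over $[0,1]$ may be carried out termwise with no convergence concern, and equating the two resulting scalars will yield the claimed identity (\ref{inP8}).

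First I would start from the expansion (\ref{Yp1}), $P(x;m,n;\lambda ,p)=\sum_{k=0}^{m}\binom{m}{k}x^{m-k}y_{6}(k,n;\lambda ,p)$. Integrating term by term and using $\int_{0}^{1}x^{m-k}\,dx=\frac{1}{m-k+1}$ (valid because $0\le m-k\le m$, so the exponent is a nonnegative integer) reproduces exactly the left-hand side of (\ref{inP8}), i.e.\ equation (\ref{inP-1}). Next I would integrate the second representation (\ref{Yp2}), $P(x;m,n;\lambda ,p)=\sum_{j=0}^{n}\binom{n}{j}^{p}\lambda ^{j}(x+j)^{m}$. Here the only computation is the inner integral $\int_{0}^{1}(x+j)^{m}\,dx$, which the substitution $u=x+j$ turns into $\int_{j}^{1+j}u^{m}\,du=\frac{(1+j)^{m+1}-j^{m+1}}{m+1}$; carrying the constant factors $\binom{n}{j}^{p}\lambda ^{j}$ through the sum then gives the right-hand side recorded in (\ref{inP2}). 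Equating the two evaluations of one and the same integral completes the argument.

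There is essentially no structural obstacle: the whole proof is the interchange of a finite sum with an elementary integral. The single point demanding care is the antiderivative of $(x+j)^{m}$ together with the bookkeeping of its endpoint values, since this is where the numerator and the divisor $m+1$ are produced. I would double-check the exponent against the form stated in (\ref{inP2}) and (\ref{inP8}), because a careful evaluation yields $(1+j)^{m+1}-j^{m+1}$ in the numerator rather than $(1+j)^{m}-j^{m}$, so one must confirm that the exponents on both sides are matched consistently before asserting the final equality.
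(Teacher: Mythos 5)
Your proposal follows exactly the paper's own route: integrate the two representations (\ref{Yp1}) and (\ref{Yp2}) of $P(x;m,n;\lambda ,p)$ termwise over $[0,1]$ and equate the results. Your cautionary remark about the exponent is well founded: since
\begin{equation*}
\int\limits_{0}^{1}\left( x+j\right) ^{m}dx=\frac{\left( 1+j\right)
^{m+1}-j^{m+1}}{m+1},
\end{equation*}
the numerator on the right-hand side of (\ref{inP2}) and (\ref{inP8}) should read $\left( 1+j\right) ^{m+1}-j^{m+1}$ rather than $\left( 1+j\right) ^{m}-j^{m}$, so the statement as printed contains an error in the exponent that your derivation correctly detects (and which propagates into the binomial expansion used in the subsequent corollary).
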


Since%
\begin{equation*}
\left( 1+j\right) ^{m}-j^{m}=\sum\limits_{l=0}^{m-1}\left( 
\begin{array}{c}
m \\ 
l%
\end{array}
\right) j^{l},
\end{equation*}%
we modify (\ref{inP8}) as follows:

\begin{corollary}
\begin{equation}
\sum\limits_{k=0}^{m}\left( 
\begin{array}{c}
m \\ 
k%
\end{array}
\right) \frac{m+1}{m-k+1}y_{6}(k,n;\lambda ,p)=\sum\limits_{j=0}^{n}\left( 
\begin{array}{c}
n \\ 
j%
\end{array}
\right) ^{p}\lambda ^{j}\sum\limits_{l=0}^{m-1}\left( 
\begin{array}{c}
m \\ 
l%
\end{array}
\right) j^{l}.  \label{inP8a}
\end{equation}
\end{corollary}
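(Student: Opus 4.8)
The plan is to derive (\ref{inP8a}) straight from the identity recorded in the preceding Theorem, equation (\ref{inP8}); no fresh analysis is needed, since both sides of (\ref{inP8}) already equal $\int_{0}^{1}P(x;m,n;\lambda,p)\,dx$ by the two integral evaluations (\ref{inP-1}) and (\ref{inP2}). The entire content of the Corollary is a rescaling of (\ref{inP8}) together with the elementary binomial expansion displayed just above its statement, so the task is purely one of algebraic rearrangement.

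First I would multiply both sides of (\ref{inP8}) by $m+1$. On the left this replaces the factor $\tfrac{1}{m-k+1}$ in the $k$-th summand by $\tfrac{m+1}{m-k+1}$, which is exactly the left-hand side of (\ref{inP8a}). On the right the explicit $\tfrac{1}{m+1}$ cancels, leaving $\sum_{j=0}^{n}\binom{n}{j}^{p}\lambda^{j}\bigl((1+j)^{m}-j^{m}\bigr)$. Then I would rewrite the difference $(1+j)^{m}-j^{m}$ by the binomial theorem: since $(1+j)^{m}=\sum_{l=0}^{m}\binom{m}{l}j^{l}$ and the $l=m$ term equals $j^{m}$, one has $(1+j)^{m}-j^{m}=\sum_{l=0}^{m-1}\binom{m}{l}j^{l}$. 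Substituting this into the inner factor turns the right-hand side into the double sum $\sum_{j=0}^{n}\binom{n}{j}^{p}\lambda^{j}\sum_{l=0}^{m-1}\binom{m}{l}j^{l}$, which is precisely the right-hand side of (\ref{inP8a}), completing the derivation.

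I do not anticipate a genuine obstacle here: the passage from (\ref{inP8}) to (\ref{inP8a}) is a single-line manipulation, and in that sense the only ``difficulty'' is recognizing that nothing beyond (\ref{inP8}) and a standard binomial expansion is required. The two points deserving a word are that the scaling by $m+1$ is legitimate because $m+1\neq 0$ for every $m\in\mathbb{N}_{0}$, and that the identity $(1+j)^{m}-j^{m}=\sum_{l=0}^{m-1}\binom{m}{l}j^{l}$ is valid for all such $m$ — for $m=0$ both sides vanish, the right-hand side being an empty sum — so the substitution needs no separate treatment of small $m$.
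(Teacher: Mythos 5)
Your proposal is correct and matches the paper's own derivation exactly: the paper obtains (\ref{inP8a}) from (\ref{inP8}) by clearing the factor $\tfrac{1}{m+1}$ and invoking the same expansion $(1+j)^{m}-j^{m}=\sum_{l=0}^{m-1}\binom{m}{l}j^{l}$ stated just before the Corollary. Your added remarks on $m+1\neq 0$ and the empty-sum case $m=0$ are harmless extra care, not a deviation in method.
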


Combining right hand side of equation (\ref{inP8a}) with (\ref{Yp1}) and (%
\ref{y6b}), after some elementary calculations, we get the following
corollary:

\begin{corollary}
\begin{equation*}
P(1;m,n;\lambda ,p)=\frac{m+1}{n!}\sum\limits_{k=0}^{m}\left( 
\begin{array}{c}
m \\ 
k%
\end{array}
\right) \frac{y_{6}(k,n;\lambda ,p)}{m-k+1}+y_{6}(m,n;\lambda ,p).
\end{equation*}
\end{corollary}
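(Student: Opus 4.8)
The plan is to derive the identity purely algebraically from the two representations of $P(1;m,n;\lambda ,p)$ already at hand, namely the coefficient formula (\ref{Yp1}) and the combinatorial identity (\ref{inP8a}). First I would set $x=1$ in (\ref{Yp1}), which collapses the powers $x^{m-k}$ to $1$ and yields
\begin{equation*}
P(1;m,n;\lambda ,p)=\sum_{k=0}^{m}\binom{m}{k}y_{6}(k,n;\lambda ,p).
\end{equation*}
Splitting off the top term $k=m$, where $\binom{m}{m}=1$, gives the partial sum
\begin{equation*}
\sum_{k=0}^{m-1}\binom{m}{k}y_{6}(k,n;\lambda ,p)=P(1;m,n;\lambda ,p)-y_{6}(m,n;\lambda ,p).
\end{equation*}
This reindexing is the one genuinely delicate bookkeeping step: the truncation at $m-1$ is exactly what will later produce the isolated additive term $y_{6}(m,n;\lambda ,p)$ in the final formula.

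Next I would rewrite the right-hand side of (\ref{inP8a}). Interchanging the order of summation there and invoking the explicit formula (\ref{y6b}) in the form $\sum_{j=0}^{n}\binom{n}{j}^{p}\lambda^{j}j^{l}=n!\,y_{6}(l,n;\lambda ,p)$, the double sum becomes
\begin{equation*}
\sum_{j=0}^{n}\binom{n}{j}^{p}\lambda^{j}\sum_{l=0}^{m-1}\binom{m}{l}j^{l}
=n!\sum_{l=0}^{m-1}\binom{m}{l}y_{6}(l,n;\lambda ,p).
\end{equation*}
By the partial-sum identity established in the first paragraph, the right-hand side of (\ref{inP8a}) therefore equals $n!\bigl(P(1;m,n;\lambda ,p)-y_{6}(m,n;\lambda ,p)\bigr)$.

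Finally I would read off the left-hand side of (\ref{inP8a}) as $(m+1)\sum_{k=0}^{m}\binom{m}{k}\frac{y_{6}(k,n;\lambda ,p)}{m-k+1}$, equate the two sides, and solve the resulting linear equation for $P(1;m,n;\lambda ,p)$: dividing by $n!$ and transposing the term $y_{6}(m,n;\lambda ,p)$ gives precisely the claimed expression. No convergence or analytic issues arise, since everything is a finite sum; the only real content is the clean match between the truncated range $0\le l\le m-1$ coming from $(1+j)^{m}-j^{m}$ and the full range $0\le k\le m$ in the expansion of $P(1;m,n;\lambda ,p)$, so I expect the whole argument to reduce to a few lines of index manipulation.
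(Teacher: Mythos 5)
Your proposal is correct and follows essentially the same route the paper indicates: combining the right-hand side of (\ref{inP8a}) with (\ref{Yp1}) at $x=1$ and the explicit formula (\ref{y6b}), then isolating the $k=m$ term and solving for $P(1;m,n;\lambda ,p)$. You have simply filled in the ``elementary calculations'' the paper leaves implicit, and the index bookkeeping (the truncated range $0\le l\le m-1$ versus the full range $0\le k\le m$) is handled correctly.
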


\subsubsection{$p$-adic integrals of the polynomials $P(x;m,n;\protect%
\lambda ,p)$}

Here, we give some identities with aid of the $p$-adic integrals of the
polynomials $P(x;m,n;\lambda ,p)$.

In order to give $p$-adic integrals representation of the polynomials $%
P(x;m,n;\lambda ,p)$ on $\mathbb{Z}_{p}$, we need to the following relations
and definitions.

Let $\ \mathbb{K}$ be a field with a complete valuation. Let $f\in C^{1}(%
\mathbb{Z}_{p}\rightarrow \mathbb{\ K)}$, set of continuous derivative
functions. The $p$-adic integral (Volkenborn integral) of $f$ on $\mathbb{Z}%
_{p}$ is given by%
\begin{equation}
\int_{\mathbb{Z}_{p}}f(x)d\mu _{1}(x)=\lim_{N\rightarrow \infty }\frac{1}{
p^{N}}\sum_{x=0}^{p^{N}-1}f(x),  \label{TJM-1}
\end{equation}%
where $\mu _{1}(x)$ denotes the Haar distribution, which is defined by 
\begin{equation*}
\mu _{1}(x)=\mu _{1}(x+p^{N}\mathbb{Z}_{p})=\frac{1}{p^{N}}
\end{equation*}%
(\textit{cf}. \cite{Schikof}, \cite{MSKIMJNT}, \cite{TKIMvolkenborn}).

Kim \cite{Kim2006TMIC} defined the fermionic $p$-adic integral on $\mathbb{Z}%
_{p}$ as follows%
\begin{equation}
\int\limits_{\mathbb{Z}_{p}}f\left( x\right) d\mu _{-1}\left( x\right) = 
\underset{N\rightarrow \infty }{\lim }\sum_{x=0}^{p^{N}-1}\left( -1\right)
^{x}f\left( x\right)  \label{Mmm}
\end{equation}%
where $p\neq 2$ and%
\begin{equation*}
\mu _{-1}(x)=\mu _{-1}\left( x+p^{N}\mathbb{Z}_{p}\right) =\frac{(-1)^{x}}{
p^{N}}
\end{equation*}%
(see also \textit{cf}. \cite{MSKIMJNT}, \cite{TKIMvolkenborn}).

By applying the Volkenborn integral in (\ref{TJM-1}) to both sides of (\ref%
{Yp1}) and (\ref{Yp2}) with respect to $x$ yields, respectively,%
\begin{equation}
\int\limits_{\mathbb{Z}_{p}}P(x;m,n;\lambda ,p)d\mu _{1}\left( x\right)
=\sum\limits_{k=0}^{m}\left( 
\begin{array}{c}
m \\ 
k%
\end{array}
\right) y_{6}(k,n;\lambda ,p)B_{m-k},  \label{inP3}
\end{equation}%
and%
\begin{equation}
\int\limits_{\mathbb{Z}_{p}}P(x;m,n;\lambda ,p)d\mu _{1}\left( x\right)
=\sum\limits_{j=0}^{n}\left( 
\begin{array}{c}
n \\ 
j%
\end{array}
\right) ^{p}\lambda ^{j}B_{m}(j).  \label{inP4}
\end{equation}%
Combining (\ref{inP3}) and (\ref{inP4}), we arrive at the following theorem:

\begin{theorem}
\begin{equation*}
\sum\limits_{k=0}^{m}\left( 
\begin{array}{c}
m \\ 
k%
\end{array}
\right) y_{6}(k,n;\lambda ,p)B_{m-k}=\sum\limits_{j=0}^{n}\left( 
\begin{array}{c}
n \\ 
j%
\end{array}
\right) ^{p}\lambda ^{j}B_{m}(j).
\end{equation*}
\end{theorem}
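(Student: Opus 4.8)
The plan is to compute the single Volkenborn integral $\int_{\mathbb{Z}_p} P(x;m,n;\lambda,p)\, d\mu_1(x)$ in two different ways by feeding into it the two explicit representations of the polynomial $P(x;m,n;\lambda,p)$ that are already available, namely (\ref{Yp1}) and (\ref{Yp2}), and then to equate the two outputs. Since both (\ref{Yp1}) and (\ref{Yp2}) are \emph{finite} linear combinations of monomials in $x$, no convergence or interchange-of-limit subtleties arise: the linearity of the integral defined in (\ref{TJM-1}) lets me pull the coefficients $\binom{m}{k} y_6(k,n;\lambda,p)$ and $\binom{n}{j}^p \lambda^j$ outside the integral at once.

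The engine of the argument is the pair of standard Volkenborn-integral evaluations $\int_{\mathbb{Z}_p} x^{r}\, d\mu_1(x) = B_{r}$ and $\int_{\mathbb{Z}_p} (x+j)^{m}\, d\mu_1(x) = B_{m}(j)$, the first producing the Bernoulli numbers and the second the Bernoulli polynomials. First I would apply the integral term-by-term to (\ref{Yp1}): each summand contributes $\binom{m}{k} y_6(k,n;\lambda,p) \int_{\mathbb{Z}_p} x^{m-k}\, d\mu_1(x) = \binom{m}{k} y_6(k,n;\lambda,p) B_{m-k}$, which assembles into the right-hand side of (\ref{inP3}). Next I would apply the same integral to (\ref{Yp2}): each summand contributes $\binom{n}{j}^p \lambda^j \int_{\mathbb{Z}_p} (x+j)^{m}\, d\mu_1(x) = \binom{n}{j}^p \lambda^j B_{m}(j)$, giving the right-hand side of (\ref{inP4}). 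Because the left-hand sides of (\ref{inP3}) and (\ref{inP4}) are literally the same integral, equating the two right-hand sides yields the claimed identity.

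The only real content beyond bookkeeping is the two moment formulas for $d\mu_1$; these are classical and can be taken from the references (\textit{cf}. \cite{Schikof}, \cite{MSKIMJNT}, \cite{TKIMvolkenborn}). I do not expect any genuine obstacle: the hardest step is merely recording the correct normalization and sign convention for the Bernoulli numbers so that $\int_{\mathbb{Z}_p} x^r\, d\mu_1(x) = B_r$ holds exactly (the Volkenborn convention $B_1=-\tfrac12$), after which the theorem is an immediate consequence of the already-established equations (\ref{inP3}) and (\ref{inP4}).
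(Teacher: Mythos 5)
Your proposal is correct and follows essentially the same route as the paper: the author likewise applies the Volkenborn integral of (\ref{TJM-1}) to the two representations (\ref{Yp1}) and (\ref{Yp2}), obtains (\ref{inP3}) and (\ref{inP4}) via the moment formulas $\int_{\mathbb{Z}_p} x^{r}\, d\mu _{1}(x)=B_{r}$ and $\int_{\mathbb{Z}_p} (x+j)^{m}\, d\mu _{1}(x)=B_{m}(j)$, and equates the results. Your additional remarks on linearity, finiteness of the sums, and the normalization $B_{1}=-\tfrac{1}{2}$ are sound but do not change the argument.
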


By applying the fermionic $p$-adic integral in (\ref{Mmm}) to both sides of (%
\ref{Yp1}) and (\ref{Yp2}) with respect to $x$ yields, respectively,%
\begin{equation}
\int\limits_{\mathbb{Z}_{p}}P(x;m,n;\lambda ,p)d\mu _{-1}\left( x\right)
=\sum\limits_{k=0}^{m}\left( 
\begin{array}{c}
m \\ 
k%
\end{array}
\right) y_{6}(k,n;\lambda ,p)E_{m-k},  \label{inP5}
\end{equation}%
and%
\begin{equation}
\int\limits_{\mathbb{Z}_{p}}P(x;m,n;\lambda ,p)d\mu _{-1}\left( x\right)
=\sum\limits_{j=0}^{n}\left( 
\begin{array}{c}
n \\ 
j%
\end{array}
\right) ^{p}\lambda ^{j}E_{m}(j).  \label{inP6}
\end{equation}%
Combining (\ref{inP5}) and (\ref{inP6}), we arrive at the following theorem:

\begin{theorem}
\begin{equation*}
\sum\limits_{k=0}^{m}\left( 
\begin{array}{c}
m \\ 
k%
\end{array}
\right) y_{6}(k,n;\lambda ,p)E_{m-k}=\sum\limits_{j=0}^{n}\left( 
\begin{array}{c}
n \\ 
j%
\end{array}
\right) ^{p}\lambda ^{j}E_{m}(j).
\end{equation*}
\end{theorem}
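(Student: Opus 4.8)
The plan is to evaluate the fermionic $p$-adic (Volkenborn-type) integral $\int_{\mathbb{Z}_p}P(x;m,n;\lambda,p)\,d\mu_{-1}(x)$ in two different ways, using the two equivalent closed forms of the polynomial $P(x;m,n;\lambda,p)$ already established in (\ref{Yp1}) and (\ref{Yp2}), and then to equate the two outcomes. Because both computations integrate one and the same function, their results must agree, and that equality is precisely the asserted identity.

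First I would record the Witt-type evaluations of the fermionic integral (\ref{Mmm}) against a shifted monomial. Starting from the generating-function identity $\int_{\mathbb{Z}_p}e^{(x+j)t}\,d\mu_{-1}(x)=\frac{2}{e^{t}+1}e^{jt}$ and comparing the coefficients of $t^{m}/m!$ with the defining expansion (\ref{Eul.Pol}) of the Euler polynomials (the case $k=1$), one obtains
\begin{equation*}
\int_{\mathbb{Z}_p}(x+j)^{m}\,d\mu_{-1}(x)=E_{m}(j),\qquad \int_{\mathbb{Z}_p}x^{m}\,d\mu_{-1}(x)=E_{m}.
\end{equation*}
These two formulas are the only analytic input the argument needs; everything else is the linearity of the integral applied to a finite sum.

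Next I would apply the integral to the representation (\ref{Yp1}): since the coefficients $\binom{m}{k}y_{6}(k,n;\lambda,p)$ do not depend on $x$, I pull them out of the integral and invoke $\int_{\mathbb{Z}_p}x^{m-k}\,d\mu_{-1}(x)=E_{m-k}$, which produces exactly (\ref{inP5}). Applying the integral instead to (\ref{Yp2}), I pull out $\binom{n}{j}^{p}\lambda^{j}$ and use $\int_{\mathbb{Z}_p}(x+j)^{m}\,d\mu_{-1}(x)=E_{m}(j)$, which yields (\ref{inP6}). Equating the right-hand sides of (\ref{inP5}) and (\ref{inP6}) gives the claimed identity.

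I do not expect a genuine obstacle here: the termwise integration is legitimate because both sums are finite, so no convergence or interchange-of-limit question arises, and the fermionic integral is linear. The only step requiring care is the Witt-type formula for the shifted monomial, which must be stated with the normalization of (\ref{Mmm}) so that the classical Euler polynomials $E_{m}(j)$ (rather than an Apostol--Euler variant) appear; once that normalization is fixed, the remainder reduces to a routine comparison of coefficients.
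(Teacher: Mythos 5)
Your proposal is correct and follows essentially the same route as the paper: the author likewise applies the fermionic $p$-adic integral (\ref{Mmm}) to the two representations (\ref{Yp1}) and (\ref{Yp2}) of $P(x;m,n;\lambda ,p)$, obtains (\ref{inP5}) and (\ref{inP6}) via the Witt-type formulas for $E_{m-k}$ and $E_{m}(j)$, and equates the results. Your explicit justification of the Witt formulas and of termwise integration makes the argument slightly more self-contained, but it is the same proof.
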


\subsection{Relations between the polynomials $P(x;m,n;\protect\lambda ,p)$,
Mirimanoff polynomial and Frobenius Euler, Bernoulli polynomials}

Here, we give relations between the polynomials $P(x;m,n;\lambda ,p)$,
Mirimanoff polynomial and Frobenius Euler polynomials. We also give relation
between, the polynomials $P(x;m,n;\lambda ,p)$, sums of powers of
consecutive integers and Bernoulli polynomials and numbers.

In \cite{Vandivier}, Vandiver gave some properties of the Mirimanoff
polynomial, sums of powers of consecutive integers, Bernoulli polynomials
and numbers and also congruence relations for the polynomials.

The Mirimanoff polynomial $f_{m}(x,n)$ is defined by the following forms%
\begin{equation*}
f_{m}(x,n)=\sum\limits_{j=0}^{n-1}j^{m}x^{j}
\end{equation*}
and%
\begin{equation*}
f_{m}(x,n,k)=\sum\limits_{j=0}^{n-1}\left( j+k\right) ^{m}x^{j}
\end{equation*}%
(\textit{cf}. \cite{Vandivier}).

A relation between the Mirimanoff polynomial $f_{n}(x,m)$, the Frobenius
Euler polynomials $H_{m}\left( x;u\right) $ and the polynomials $%
P(x;m,n;\lambda ,p)$ are given by%
\begin{eqnarray*}
f_{n}(x,m) &=&\lambda ^{1-n}P\left( x;m,n;\frac{1}{\lambda },0\right) \\
&=&\sum\limits_{j=0}^{n}\lambda ^{n-j-1}\left( x+j\right) ^{m} \\
&=&\frac{H_{m}\left( x+n;\frac{1}{\lambda }\right) -\frac{1}{\lambda ^{n}}
H_{m}\left( x+n;\frac{1}{\lambda }\right) }{1-\frac{1}{\lambda }}.
\end{eqnarray*}%
The above relation gives us modification of the polynomial $f_{n}(x,m)$ \cite%
[p. 12]{SimsekFPTA}. Carlitz \cite{Carlitz} studied on the Mirimanoff
polynomial $f_{n}(0,m)$ which related to the combinatorial sums, (see also 
\cite{Vandivier}, \cite{Hu}).

Substituting $x=0$ and $p=0$ into (\ref{Yp2}), we have%
\begin{equation*}
P(0;m-1,n-1;\lambda ,0)=\sum\limits_{j=0}^{n-1}\lambda ^{j}j^{m-1}=\frac{
\lambda ^{m}\mathcal{B}_{m}(n;\lambda )-\mathcal{B}_{m}(\lambda )}{m}
\end{equation*}%
(\textit{cf}. \cite{T. Kim}, \cite{Simsekaip}, \cite{SrivastavaChoi2012}).

Substituting $\lambda =1$, $x=0$ and $p=0$ into (\ref{Yp2}), we have%
\begin{equation*}
P(0;m-1,n-1;1,0)=\sum\limits_{j=0}^{n-1}j^{m-1}=\frac{B_{m}(n)-B_{m}}{m}.
\end{equation*}%
About more than 388 years ago, J. Faulhaber discovered the idea of above
formula. Faulhaber gave general formula for the power sum of the first $n$
positive integers (\textit{cf}. \cite{Conway}), see also (\textit{cf}. \cite%
{Comtet}, \cite{Grademir}, \cite{T. Kim}, \cite{KimDkimYsim}, \cite{Moll2012}%
, \cite{Kucukoglu}, \cite{SimsekJDFA}, \cite{Simsekaip}, \cite{srivas18}, 
\cite{SrivastavaChoi2012}, \cite{Temme}).

Substituting $\lambda =-1$, $x=0$ and $p=0$ into (\ref{Yp2}), we have%
\begin{equation*}
P(0;m-1,n-1;-1,0)=\sum\limits_{j=0}^{n-1}(-1)^{j}j^{m-1}=\frac{
(-1)^{n-1}E_{m}(n)-E_{m}}{2}
\end{equation*}%
(\textit{cf}. \cite{Comtet}, \cite{Conway}, \cite{Grademir}, \cite{T. Kim}, 
\cite{KimDkimYsim}, \cite{Kucukoglu}, \cite{Moll2012}, \cite{SimsekJDFA}, 
\cite{Simsekaip}, \cite{srivas18}, \cite{SrivastavaChoi2012}, \cite{Temme}).

\subsection{New family of polynomials including the Michael Vowe polynomial}

In this section we give another new family of polynomials related to the
Michael Vowe polynomial $\mathcal{M}_{n}(x)$ and the Legendre polynomial.

Substituting $\lambda=1$ into (\ref{y6b}), we give the following polynomial $%
R_{n}(x;p)$, which is a polynomial in $x$ of degree $n$: 
\begin{eqnarray*}
R_{n}(x;p) &=&y_{6}(0,n;x,p)=\frac{1}{n!}\sum\limits_{k=0}^{n}\left( 
\begin{array}{c}
n \\ 
k%
\end{array}
\right) ^{p}x^{k} \\
&=&\frac{1}{n!}\left( 
\begin{array}{c}
n \\ 
0%
\end{array}
\right) ^{p}+\frac{1}{n!}\left( 
\begin{array}{c}
n \\ 
1%
\end{array}
\right) ^{p}x+\cdots +\frac{1}{n!}\left( 
\begin{array}{c}
n \\ 
n%
\end{array}
\right) ^{p}x^{n}.
\end{eqnarray*}

\begin{remark}
Substituting $p=2$ into the above equation, we get the Michael Vowe 
polynomial 
\begin{equation*}
\mathcal{M}_{n}(x)=n!R_{n}(x;2)
\end{equation*}
(\textit{cf}. \textup{\cite{Golombek2}}).
\end{remark}

The well-known Euler operator is given by 
\begin{equation*}
\vartheta =x\frac{d}{dx}
\end{equation*}%
(\textit{cf}. \cite[p. 168]{Moll2012}).

By applying the Euler operator to the polynomial $R_{n}(x;p)$, we give the
following sequence $(\alpha _{n}(x;n,p))$:%
\begin{eqnarray*}
\alpha _{1}(x;n,p) &=&x\frac{d}{dx}\left\{ R_{n}(x;p)\right\} , \\
\alpha _{2}(x;n,p) &=&x\frac{d}{dx}\left\{ \alpha _{1}(x;n,p)\right\} , \\
\alpha _{3}(x;n,p) &=&x\frac{d}{dx}\left\{ \alpha _{2}(x;n,p)\right\} ,\ldots
\end{eqnarray*}%
and for $m\geq 2$, we set%
\begin{equation*}
\alpha _{m}(x;n,p)=x\frac{d}{dx}\left\{ \alpha _{m-1}(x;n,p)\right\} .
\end{equation*}

\begin{theorem}
Let 
\begin{equation*}
\alpha _{1}(\lambda ;n,p)=x\frac{d}{dx}\left\{ R_{n}(x;p)\right\} \left\vert
_{x=\lambda }\right. .
\end{equation*}
If $m\geq 2$, then we have 
\begin{equation}
y_{6}(m,n;\lambda ;p)=\alpha _{m}(\lambda ;n,p)  \label{Yp3}
\end{equation}
where 
\begin{equation*}
\alpha _{m}(\lambda ;n,p)=x\frac{d}{dx}\left\{ \alpha _{m-1}(x;n,p)\right\}
\left\vert _{x=\lambda }\right. .
\end{equation*}
\end{theorem}

\begin{proof}
	Proof of this theorem can be easily given by mathematical induction on $m$.
\end{proof}

Substituting $\lambda =1$ into (\ref{Yp3}), we get generalized numbers of
Golombek and Marburg \cite{Golombek2} by the following corollary:

\begin{corollary}
Let 
\begin{equation*}
\alpha _{1}(1;n,p)=x\frac{d}{dx}\left\{ R_{n}(x;p)\right\} \left\vert
_{x=1}\right. .
\end{equation*}
If $m\geq 2$, then we have 
\begin{equation*}
y_{6}(m,n;p)=\alpha _{m}(1;n,p)
\end{equation*}
where 
\begin{equation*}
\alpha _{m}(1;n,p)=x\frac{d}{dx}\left\{ \alpha _{m-1}(x;n,p)\right\}
\left\vert _{x=1}\right. .
\end{equation*}
\end{corollary}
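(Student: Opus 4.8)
The plan is to prove the slightly stronger intermediate statement that, for every $m\geq 1$, the $m$-fold iterated Euler operator sends $R_n(x;p)$ to the explicit polynomial
\begin{equation*}
\alpha_m(x;n,p)=\frac{1}{n!}\sum_{k=0}^{n}\binom{n}{k}^{p}k^{m}x^{k},
\end{equation*}
and then to specialize $x=\lambda$ and recognize the right-hand side via the explicit formula (\ref{y6b}) for $y_6$. The entire argument rests on a single observation: each monomial $x^{k}$ is an eigenfunction of the Euler operator $\vartheta=x\tfrac{d}{dx}$ with eigenvalue $k$, that is $\vartheta\,x^{k}=k\,x^{k}$, so that applying $\vartheta$ repeatedly yields $\vartheta^{m}x^{k}=k^{m}x^{k}$. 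Since $R_n(x;p)$ is a finite linear combination of such monomials, the operator acts diagonally and simply inserts a factor $k^{m}$ in front of each term.

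First I would verify the base case $m=1$. Starting from $R_n(x;p)=\frac{1}{n!}\sum_{k=0}^{n}\binom{n}{k}^{p}x^{k}$ and differentiating term by term gives $\frac{d}{dx}R_n(x;p)=\frac{1}{n!}\sum_{k=0}^{n}\binom{n}{k}^{p}k\,x^{k-1}$, and multiplying by $x$ produces $\alpha_1(x;n,p)=\frac{1}{n!}\sum_{k=0}^{n}\binom{n}{k}^{p}k\,x^{k}$, which is exactly the displayed formula at $m=1$. For the inductive step I would assume the formula for $\alpha_{m-1}(x;n,p)$ and apply $\vartheta$ once more: term-by-term differentiation turns each summand $k^{m-1}x^{k}$ into $k^{m-1}\cdot k\,x^{k-1}$, and the outer factor $x$ restores the exponent, giving $k^{m}x^{k}$. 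Because the sum is finite there is no issue interchanging $\vartheta$ with summation, so $\alpha_m(x;n,p)=\frac{1}{n!}\sum_{k=0}^{n}\binom{n}{k}^{p}k^{m}x^{k}$, closing the induction.

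Finally, evaluating at $x=\lambda$ yields $\alpha_m(\lambda;n,p)=\frac{1}{n!}\sum_{k=0}^{n}\binom{n}{k}^{p}k^{m}\lambda^{k}$, which is precisely the right-hand side of (\ref{y6b}); hence $y_6(m,n;\lambda;p)=\alpha_m(\lambda;n,p)$ for all $m\geq 2$ (in fact for all $m\geq 1$, the case $m=1$ being the base step itself). I do not anticipate any genuine obstacle, since the result is essentially the eigenvalue action of $\vartheta$ on monomials; the only point deserving a moment's care is the $k=0$ term, which contributes $0^{m}=0$ as soon as $m\geq 1$ and therefore drops out harmlessly, consistent with the convention for $0^{m}$ fixed in the Notations subsection.
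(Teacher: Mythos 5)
Your proposal is correct and follows essentially the same route as the paper: the paper proves the underlying theorem (\ref{Yp3}) by the very induction on $m$ you carry out (via the eigenvalue action $\vartheta\,x^{k}=k\,x^{k}$ of the Euler operator on monomials), and obtains this corollary simply by setting $\lambda=1$. You have merely written out in full the induction the paper leaves to the reader, including the harmless $k=0$ term.
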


\begin{remark}
Observe that Golombek and Marburg \textup{\cite{Golombek2}} gave the
following  relations: 
\begin{eqnarray*}
Q(n,1) &=&n!\alpha _{1}(1;n,2)=n!x\frac{d}{dx}\left\{ R_{n}(x;2)\right\}
\left\vert _{x=1}\right. , \\
Q(n,2) &=&n!\alpha _{2}(1;n,2)=n!x\frac{d}{dx}\left( x\frac{d}{dx}\left\{
R_{n}(x;2)\right\} \right) \left\vert _{x=1}\right. ,
\end{eqnarray*}
and 
\begin{equation*}
Q(n,3)=n!\alpha _{3}(1;n,2)=n!x\frac{d}{dx}\left( x\frac{d}{dx}\left( x\frac{
d}{dx}\left\{ R_{n}(x;2)\right\} \right) \right) \left\vert _{x=1}\right. .
\end{equation*}
\end{remark}

\begin{remark}
We note that for $\mathcal{M}_{0}(x)=1$ and $\mathcal{M}_{1}(x)=1+x$, Moll  
\textup{\cite{Moll2012}} also gave the following recurrence relations and
identities  for the polynomial $\mathcal{M}_{n}(x)$: 
\begin{equation*}
\mathcal{M}_{n+1}(x)=\frac{2n+1}{n+1}\left( 1+x\right) \mathcal{M}_{n}(x)- 
\frac{n}{n+1}\left( 1-x\right) ^{2}\mathcal{M}_{n-1}(x),
\end{equation*}
(cf. \textup{\cite[p. 169, Theorem 5.3.5.]{Moll2012}}) 
\begin{equation*}
\mathcal{M}_{n}(x)=\left( 1-x\right) ^{n}P_{n}\left( \frac{1+x}{1-x}\right) ,
\end{equation*}
where the Legendre polynomial (cf. \textup{\cite[Eq. (5.3.12)]{Moll2012})}.
Applying the Euler operator to the polynomial $\mathcal{M}_{n}(x)$, Moll 
\textup{\cite[Eq. (5.3.13)]{Moll2012}} gave the following relation:  
\begin{equation*}
M_{j,2}(n)=\vartheta ^{j}\left\{ \mathcal{M}_{n}(x)\right\} ,
\end{equation*}
evaluated at $x=1$.
\end{remark}

\section{Ordinary generating functions for the numbers $y_{6}\left( m,n; 
\protect\lambda ;p\right) $}

In this section, we give some remarks, observations and open questions for
the following function $f\left( t;\lambda ;p,m\right) $:%
\begin{equation*}
\sum_{n=0}^{\infty }y_{6}\left( m,n;\lambda ;p\right) t^{n}=f\left(
t;\lambda ;p,m\right) .
\end{equation*}

We try to compute few special values of the functions $f\left( t;\lambda
;p,m\right) $ as follows.

For $m=0$, and $p=0$, we have%
\begin{equation*}
\sum_{n=0}^{\infty }y_{6}\left( 0,n;\lambda ;0\right) t^{n}=\frac{1}{\lambda
-1}\left( \lambda e^{\lambda t}-e^{t}\right) ,
\end{equation*}%
where%
\begin{equation*}
f\left( t;\lambda ;0,0\right) =\frac{1}{\lambda -1}\left( \lambda e^{\lambda
t}-e^{t}\right) .
\end{equation*}%
For $m=0$, and $p=1$, we have%
\begin{equation*}
\sum_{n=0}^{\infty }y_{6}\left( 0,n;\lambda ;1\right) t^{n}=e^{(\lambda
+1)t},
\end{equation*}%
where%
\begin{equation*}
f\left( t;\lambda ;1,0\right) =e^{(\lambda +1)t}.
\end{equation*}

For $m=0$, $\lambda =1$, and $p=2$, we try to compute the value of the
following series%
\begin{equation*}
\sum_{n=0}^{\infty }y_{6}\left( 0,n;1;2\right) t^{n}=f\left( t;1;2,0\right) .
\end{equation*}%
By using (\ref{y6b}), we have%
\begin{eqnarray*}
\sum_{n=0}^{\infty }y_{6}\left( 0,n;1;2\right) t^{n} &=&\sum_{n=0}^{\infty
}\sum\limits_{k=0}^{n}\left( 
\begin{array}{c}
n \\ 
k%
\end{array}
\right) ^{2}\frac{t^{n}}{n!}=\sum_{n=0}^{\infty }\left( 
\begin{array}{c}
2n \\ 
n%
\end{array}
\right) \frac{t^{n}}{n!} \\
&=&\sum_{n=0}^{\infty }(n+1)C_{n}\frac{t^{n}}{n!}=\sum_{n=0}^{\infty }\frac{
(2n)!}{\left( n!\right) ^{3}}t^{n}
\end{eqnarray*}%
Since%
\begin{equation*}
(2n)!=2^{2n}n!\left( \frac{1}{2}\right) _{n}
\end{equation*}%
(\textit{cf}. \cite{Carlson}, \cite{Seaborn}), we also get%
\begin{equation*}
\sum_{n=0}^{\infty }y_{6}\left( 0,n;1;2\right) t^{n}=\text{ }_{1}F_{1}\left[ 
\begin{array}{c}
\frac{1}{2} \\ 
1%
\end{array}
;4t\right].
\end{equation*}%
Since%
\begin{equation*}
(2n)!=\frac{(-1)^{n}\left( 2\pi \right) ^{2n}}{2\zeta (2n)}B_{2n},
\end{equation*}%
where $\zeta (2n)$ denotes the Riemann zeta function (\textit{cf}. \cite%
{Temme}), we get%
\begin{equation*}
\sum_{n=0}^{\infty }y_{6}\left( 0,n;1;2\right) t^{n}=\sum_{n=0}^{\infty } 
\frac{(-1)^{n}2^{2n-1}\pi ^{2n}B_{2n}}{\left( \Gamma (n+1)\right) ^{3}\zeta
(2n)}t^{n}.
\end{equation*}%
Since%
\begin{equation*}
(2n)!=\frac{4^{n}\Gamma \left( \frac{1+2n}{2}\right) n!}{\sqrt{\pi }},
\end{equation*}%
we also get%
\begin{equation*}
\sum_{n=0}^{\infty }y_{6}\left( 0,n;1;2\right) t^{n}=\frac{1}{\sqrt{\pi }}
\sum_{n=0}^{\infty }\frac{4^{n}\Gamma \left( \frac{1+2n}{2}\right) }{\Gamma
\left( n+1\right) }\frac{t^{n}}{n!}.
\end{equation*}%
Therefore, we obtain the following results for the function $f\left(
t;1;2,0\right) $ as follows:%
\begin{eqnarray*}
f\left( t;1;2,0\right) &=&\text{ }_{1}F_{1}\left[ 
\begin{array}{c}
\frac{1}{2} \\ 
1%
\end{array}
;4t\right] \\
&=&\frac{1}{\sqrt{\pi }}\sum_{n=0}^{\infty }\frac{4^{n}\Gamma \left( \frac{
1+2n}{2}\right) }{\Gamma \left( n+1\right) }\frac{t^{n}}{n!} \\
&=&\sum_{n=0}^{\infty }\left( 
\begin{array}{c}
2n \\ 
n%
\end{array}
\right) \frac{t^{n}}{n!} \\
&=&\sum_{n=0}^{\infty }(n+1)C_{n}\frac{t^{n}}{n!} \\
&=&\sum_{n=0}^{\infty }\frac{(2n)!}{\left( n!\right) ^{2}}\frac{t^{n}}{n!}.
\end{eqnarray*}

\begin{equation*}
\sum_{k=1}^{\infty }y_{6}\left( 0,n;-1;2\right) t^{n}=\sum_{n=0}^{\infty } 
\frac{1}{n!}\sum\limits_{k=0}^{n}(-1)^{k}\left( 
\begin{array}{c}
n \\ 
k%
\end{array}
\right) ^{2}t^{n}.
\end{equation*}%
Combining the above equation with (\ref{AWolf}), we obtain%
\begin{equation*}
\sum_{n=0}^{\infty }y_{6}\left( 0,n;-1;2\right) t^{n}=\sum_{n=0}^{\infty } 
\frac{\sqrt{\pi }2^{n}}{\Gamma \left( \frac{2+n}{2}\right) \Gamma \left( 
\frac{1-n}{2}\right) }\frac{t^{n}}{n!}.
\end{equation*}%
We here do not able to find the explicit formula(s) of the function $%
f(t;\lambda ;p,n)$ in detail, related to all parameters $t$, $\lambda $, $p$%
, and $n$. Therefore, we give the following \textbf{open problems}:

What is the explicit value of the function $f(t;\lambda ;p,n)$?

That is, how can we construct ordinary generating function for the numbers $%
y_{6}\left( m,n;\lambda ;p\right) $?

\section{Identities for the numbers $y_{6}(m,n;\protect\lambda ,p)$}

In this section, we give some functional equations for the generating
functions of the special numbers and polynomials, By using this equations,
we derive some identities and relations including the Bernoulli polynomials
of order $k$, the Euler polynomials of order $k$, the Stirling numbers, and
the numbers $y_{6}(m,n;\lambda ,p)$.

By (\ref{y6a}), we give the following functional equation%
\begin{equation*}
F_{y_{6}}(t,m;\lambda ,p)=\frac{1}{n!}\sum\limits_{k=0}^{n}\left( 
\begin{array}{c}
n \\ 
k%
\end{array}
\right) ^{p}\lambda ^{k}\sum\limits_{l=0}^{k}\left( 
\begin{array}{c}
k \\ 
l%
\end{array}
\right) l!F_{S}(t,l;1).
\end{equation*}%
Combining the above equation with (\ref{SN-1}) and (\ref{y6}), we obtain%
\begin{equation*}
\sum_{m=0}^{\infty }y_{6}(m,n;\lambda ,p)\frac{t^{m}}{m!}=\sum_{m=0}^{\infty
}\frac{1}{n!}\sum\limits_{k=0}^{n}\left( 
\begin{array}{c}
n \\ 
k%
\end{array}
\right) ^{p}\lambda ^{k}\sum\limits_{l=0}^{k}\left( 
\begin{array}{c}
k \\ 
l%
\end{array}
\right) l!S(m,l)\frac{t^{m}}{m!}.
\end{equation*}%
Comparing the coefficients of $\frac{t^{m}}{m!}$ on both sides of the above
equation, after some elementary calculations, we find a relation between the
numbers $y_{6}(m,n;\lambda ,p)$ and the Stirling numbers of the second kind
by the following theorem:

\begin{theorem}
\begin{equation*}
y_{6}(m,n;\lambda ,p)=\sum\limits_{k=0}^{n}\sum\limits_{l=0}^{k}\left( 
\begin{array}{c}
n \\ 
k%
\end{array}
\right) ^{p-1}\frac{S(m,l)}{\left( n-k\right) !\left( k-l\right) !}\lambda
^{k}.
\end{equation*}
\end{theorem}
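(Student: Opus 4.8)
The plan is to start from the explicit formula \eqref{y6b}, namely $y_{6}(m,n;\lambda ,p)=\frac{1}{n!}\sum_{k=0}^{n}\binom{n}{k}^{p}k^{m}\lambda^{k}$, and to eliminate the monomial $k^{m}$ in favour of Stirling numbers of the second kind. The tool for this is the expansion recorded immediately after \eqref{SN-1}, which with $x=k$ and exponent $m$ reads $k^{m}=\sum_{l=0}^{k}\binom{k}{l}l!\,S(m,l)$, the terms with $l>k$ dropping out because $\binom{k}{l}=0$. Equivalently --- and this is the route suggested by the functional equation displayed just before the theorem --- one may stay at the level of generating functions, writing $e^{tk}=\bigl((e^{t}-1)+1\bigr)^{k}=\sum_{l=0}^{k}\binom{k}{l}(e^{t}-1)^{l}$ and recognizing $(e^{t}-1)^{l}=l!\,F_{S}(t,l)$ from \eqref{SN-1}, then comparing coefficients of $t^{m}/m!$ via $F_{S}(t,l)=\sum_{m}S(m,l)t^{m}/m!$. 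Both routes insert the same Stirling expansion into the sum; I would take the direct substitution, since it avoids carrying the formal variable $t$ through the calculation.

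First I would substitute the Stirling expansion of $k^{m}$ into \eqref{y6b}, obtaining the double sum
\begin{equation*}
y_{6}(m,n;\lambda ,p)=\frac{1}{n!}\sum_{k=0}^{n}\sum_{l=0}^{k}\binom{n}{k}^{p}\binom{k}{l}l!\,S(m,l)\,\lambda^{k}.
\end{equation*}
The crux of the argument is then the simplification of the scalar coefficient $\frac{1}{n!}\binom{n}{k}^{p}\binom{k}{l}l!$. I would peel off one factor of $\binom{n}{k}$ against the prefactor $1/n!$: since $\binom{n}{k}=\frac{n!}{k!(n-k)!}$, we have $\frac{1}{n!}\binom{n}{k}^{p}=\frac{\binom{n}{k}^{p-1}}{k!(n-k)!}$. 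Multiplying by $\binom{k}{l}l!=\frac{k!}{(k-l)!}$ cancels the $k!$ and leaves precisely $\frac{\binom{n}{k}^{p-1}}{(n-k)!(k-l)!}$, which is exactly the coefficient appearing in the claimed identity. Reassembling the double sum then reproduces the statement verbatim.

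The only place one can slip is this coefficient bookkeeping: one must absorb exactly one power of $\binom{n}{k}$ (leaving $p-1$), check that the $k!$ coming from $\binom{k}{l}l!$ cancels cleanly against the $1/k!$ produced by expanding $\binom{n}{k}$, and keep the inner summation range $0\le l\le k$ intact. No convergence or analytic subtleties arise, since for fixed $m,n,p$ everything is a finite sum; the whole proof is the elementary algebraic identity above combined with the standard Stirling expansion of a power.
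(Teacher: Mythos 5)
Your proposal is correct and is essentially the paper's own argument: the paper writes the functional equation $F_{y_{6}}(t,m;\lambda ,p)=\frac{1}{n!}\sum_{k=0}^{n}\binom{n}{k}^{p}\lambda ^{k}\sum_{l=0}^{k}\binom{k}{l}l!F_{S}(t,l)$ (i.e.\ expands $e^{tk}=((e^{t}-1)+1)^{k}$) and then compares coefficients of $t^{m}/m!$, which is precisely the generating-function shadow of your direct substitution $k^{m}=\sum_{l=0}^{k}\binom{k}{l}l!S(m,l)$ into \eqref{y6b}. Your coefficient bookkeeping $\frac{1}{n!}\binom{n}{k}^{p}\binom{k}{l}l!=\frac{\binom{n}{k}^{p-1}}{(n-k)!(k-l)!}$ is exactly the ``elementary calculations'' the paper leaves implicit, so the two proofs coincide.
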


By (\ref{y6a}), we give the following functional equation%
\begin{equation*}
F_{y_{6}}(t,m;\lambda ,p)=t^{-n}F_{S}(t,n;1)\sum\limits_{k=0}^{n}\left( 
\begin{array}{c}
n \\ 
k%
\end{array}
\right) ^{p}\lambda ^{k}F_{B}(t,j;n).
\end{equation*}

Combining the above equation with (\ref{BerPol.}), (\ref{SN-1}) and (\ref{y6}%
), we get%
\begin{equation*}
\sum_{m=0}^{\infty }y_{6}(m,n;\lambda ,p)\frac{t^{m}}{m!}=\sum_{m=0}^{\infty
}S(m,n)\frac{t^{m}}{m!}\sum_{m=0}^{\infty }\sum\limits_{k=0}^{n}\left( 
\begin{array}{c}
n \\ 
k%
\end{array}
\right) ^{p}\lambda ^{k}B_{m}^{(n)}(j)\frac{t^{m}}{m!}.
\end{equation*}%
Therefore%
\begin{eqnarray*}
&&\sum_{m=0}^{\infty }y_{6}(m,n;\lambda ,p)\frac{t^{m}}{m!} \\
&=&\sum_{m=0}^{\infty }\sum\limits_{k=0}^{n}\sum\limits_{v=0}^{m+n}\frac{
\left( 
\begin{array}{c}
n \\ 
k%
\end{array}
\right) ^{p}\left( 
\begin{array}{c}
m+n \\ 
v%
\end{array}
\right) }{\left( 
\begin{array}{c}
m+n \\ 
n%
\end{array}
\right) n!}\lambda ^{k}S(v,n)B_{m+n-v}^{(n)}(j)\frac{t^{m}}{m!}.
\end{eqnarray*}%
Comparing the coefficients of $\frac{t^{m}}{m!}$ on both sides of the above
equation, after some elementary calculations, we arrive at the following
theorem:

\begin{theorem}
\begin{equation*}
y_{6}(m,n;\lambda ,p)=\sum\limits_{k=0}^{n}\sum\limits_{v=0}^{m+n}\frac{
\left( 
\begin{array}{c}
n \\ 
k%
\end{array}
\right) ^{p}\left( 
\begin{array}{c}
m+n \\ 
v%
\end{array}
\right) }{\left( 
\begin{array}{c}
m+n \\ 
n%
\end{array}
\right) n!}\lambda ^{k}S(v,n)B_{m+n-v}^{(n)}(j).
\end{equation*}
\end{theorem}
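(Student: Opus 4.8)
The plan is to read the identity off as a coefficient of the product (functional-equation) representation of the exponential generating function $F_{y_6}(t,m;\lambda,p)$. First I would record the functional equation
\begin{equation*}
F_{y_6}(t,m;\lambda,p)=t^{-n}F_S(t,n;1)\sum_{k=0}^{n}\binom{n}{k}^{p}\lambda^{k}F_B(t,k;n),
\end{equation*}
which is the display preceding the statement. It is obtained from the defining sum $F_{y_6}(t,m;\lambda,p)=\frac{1}{n!}\sum_{k=0}^{n}\binom{n}{k}^{p}\lambda^{k}e^{tk}$ in (\ref{y6a}) by inserting the trivial factorization $1=\frac{(e^{t}-1)^{n}}{t^{n}}\cdot\frac{t^{n}}{(e^{t}-1)^{n}}$ into each summand, pulling the $k$-free factor $\frac{(e^{t}-1)^{n}}{t^{n}}$ out of the sum, and recognizing $\frac{(e^{t}-1)^{n}}{n!}=F_S(t,n;1)$ from (\ref{SN-1}) together with $\left(\frac{t}{e^{t}-1}\right)^{n}e^{tk}=F_B(t,k;n)$ from (\ref{BerPol.}); the surviving $n!$ cancels. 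I note that the symbol written $j$ in the statement is precisely this summation index $k$.

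Next I would substitute the two power-series expansions $F_S(t,n;1)=\sum_{v=0}^{\infty}S(v,n)\frac{t^{v}}{v!}$ and $F_B(t,k;n)=\sum_{w=0}^{\infty}B_{w}^{(n)}(k)\frac{t^{w}}{w!}$ and multiply them as exponential generating functions, using the binomial convolution $\frac{t^{a}}{a!}\cdot\frac{t^{b}}{b!}=\binom{a+b}{a}\frac{t^{a+b}}{(a+b)!}$. Because $S(v,n)=0$ whenever $v<n$, the product series has lowest degree $t^{n}$, so multiplying the functional equation through by $t^{n}$ produces an identity of genuine power series with no negative powers: writing $c_{w}=\sum_{k=0}^{n}\binom{n}{k}^{p}\lambda^{k}B_{w}^{(n)}(k)$, the Cauchy product shows that the coefficient of $\frac{t^{M}}{M!}$ in $t^{n}F_{y_6}$ equals $\sum_{v=0}^{M}\binom{M}{v}S(v,n)c_{M-v}$.

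Then I would match coefficients. On the left, $t^{n}F_{y_6}=\sum_{m}y_{6}(m,n;\lambda,p)\frac{t^{m+n}}{m!}$, so the coefficient of $\frac{t^{M}}{M!}$ with $M=m+n$ is $y_{6}(m,n;\lambda,p)\frac{(m+n)!}{m!}$. Equating the two expressions and solving yields
\begin{equation*}
y_{6}(m,n;\lambda,p)=\frac{m!}{(m+n)!}\sum_{k=0}^{n}\sum_{v=0}^{m+n}\binom{m+n}{v}\binom{n}{k}^{p}\lambda^{k}S(v,n)B_{m+n-v}^{(n)}(k),
\end{equation*}
and the prefactor is cast in the stated form through $\frac{m!}{(m+n)!}=\frac{1}{\binom{m+n}{n}\,n!}$, which follows directly from $\binom{m+n}{n}=\frac{(m+n)!}{n!\,m!}$.

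The only delicate point, and the step I would check most carefully, is the bookkeeping of the factor $t^{-n}$: one must verify that its apparent pole is cancelled exactly by the order-$n$ vanishing of the product series $F_S(t,n;1)\sum_{k}\binom{n}{k}^{p}\lambda^{k}F_B(t,k;n)$, which is guaranteed by $S(v,n)=0$ for $v<n$ and by the explicit factor $t^{n}$ inside $F_B(t,k;n)$, so that no negative powers survive and the comparison of the coefficients of $\frac{t^{m+n}}{(m+n)!}$ is legitimate. Everything else reduces to the routine Cauchy-product expansion and the elementary simplification of the constant prefactor.
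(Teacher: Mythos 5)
Your argument is correct and follows essentially the same route as the paper: the same functional equation $F_{y_6}(t,m;\lambda,p)=t^{-n}F_S(t,n;1)\sum_{k}\binom{n}{k}^{p}\lambda^{k}F_B(t,k;n)$, followed by the Cauchy product of the two exponential series and comparison of coefficients, with $\frac{m!}{(m+n)!}=\frac{1}{\binom{m+n}{n}\,n!}$ giving the stated prefactor. Your treatment is in fact slightly more careful than the paper's, since you make explicit the shift from the coefficient of $t^{m}/m!$ to that of $t^{m+n}/(m+n)!$ forced by the $t^{-n}$ factor, and you correctly identify the paper's stray symbol $j$ as the summation index $k$.
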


By (\ref{y6a}), we give the following functional equation%
\begin{equation*}
F_{y_{6}}(t,m;\lambda ,p)=2^{-n}F_{y_{1}}(t,n;1)\sum\limits_{k=0}^{n}\left( 
\begin{array}{c}
n \\ 
k%
\end{array}
\right) ^{p}\lambda ^{k}F_{E}(t,j;n).
\end{equation*}

Combining the above equation with (\ref{Eul.Pol}), (\ref{ay1}) and (\ref{y6}%
), we get%
\begin{eqnarray*}
&&\sum_{m=0}^{\infty }y_{6}(m,n;\lambda ,p)\frac{t^{m}}{m!} \\
&=&2^{-n}\sum\limits_{k=0}^{n}\left( 
\begin{array}{c}
n \\ 
k%
\end{array}
\right) ^{p}\lambda ^{k}\sum_{m=0}^{\infty }y_{1}(m,n;1)\frac{t^{m}}{m!}
\sum_{m=0}^{\infty }E_{m}^{(n)}(j)\frac{t^{m}}{m!}.
\end{eqnarray*}%
Therefore%
\begin{eqnarray*}
&&\sum_{m=0}^{\infty }y_{6}(m,n;\lambda ,p)\frac{t^{m}}{m!} \\
&=&\sum_{m=0}^{\infty }\sum\limits_{k=0}^{n}\sum\limits_{v=0}^{m}\left( 
\begin{array}{c}
n \\ 
k%
\end{array}
\right) ^{p}\left( 
\begin{array}{c}
m \\ 
v%
\end{array}
\right) \lambda ^{k}y_{1}(v,n;1)E_{m-v}^{(n)}(j)\frac{t^{m}}{m!}.
\end{eqnarray*}%
Substituting (\ref{CC2}) into the above equation, after that comparing the
coefficients of $\frac{t^{m}}{m!}$ on both sides of the above equation,
after some elementary calculations, we arrive at the following theorem:

\begin{theorem}
\begin{equation*}
y_{6}(m,n;\lambda ,p)=\frac{1}{n!2^{n}}\sum\limits_{k=0}^{n}\sum
\limits_{v=0}^{m}\left( 
\begin{array}{c}
n \\ 
k%
\end{array}
\right) ^{p}\left( 
\begin{array}{c}
m \\ 
v%
\end{array}
\right) \lambda ^{k}B(v,n)E_{m-v}^{(n)}(j).
\end{equation*}
\end{theorem}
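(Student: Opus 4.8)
The plan is to read off the identity from the functional equation
\[
F_{y_{6}}(t,m;\lambda ,p)=2^{-n}F_{y_{1}}(t,n;1)\sum_{k=0}^{n}\binom{n}{k}^{p}\lambda ^{k}F_{E}(t,k;n)
\]
displayed just above the theorem, and then to match Taylor coefficients in $t$. First I would justify this functional equation directly from the defining generating functions. Inserting the closed forms $F_{y_{1}}(t,n;1)=\frac{1}{n!}\left(e^{t}+1\right)^{n}$ from (\ref{ay1}) and $F_{E}(t,k;n)=\left(\frac{2}{e^{t}+1}\right)^{n}e^{tk}$ from (\ref{Eul.Pol}), the product $F_{y_{1}}(t,n;1)F_{E}(t,k;n)$ collapses because the factor $\left(e^{t}+1\right)^{n}$ cancels against $\left(e^{t}+1\right)^{-n}$, leaving $\frac{2^{n}}{n!}e^{tk}$; the prefactor $2^{-n}$ then cancels the $2^{n}$, so the right-hand side reduces to $\frac{1}{n!}\sum_{k=0}^{n}\binom{n}{k}^{p}\lambda^{k}e^{tk}=F_{y_{6}}(t,m;\lambda ,p)$ as in (\ref{y6a}). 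This verifies the equation and shows that the symbol written $j$ in the theorem is simply the running summation index $k$.

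Next I would expand both factors as exponential generating series, $F_{y_{1}}(t,n;1)=\sum_{v\geq 0}y_{1}(v,n;1)\frac{t^{v}}{v!}$ by (\ref{ay1}) and $F_{E}(t,k;n)=\sum_{\ell\geq 0}E_{\ell}^{(n)}(k)\frac{t^{\ell}}{\ell!}$ by (\ref{Eul.Pol}), and form their Cauchy product
\[
F_{y_{1}}(t,n;1)\,F_{E}(t,k;n)=\sum_{m\geq 0}\left(\sum_{v=0}^{m}\binom{m}{v}y_{1}(v,n;1)E_{m-v}^{(n)}(k)\right)\frac{t^{m}}{m!}.
\]
Substituting this into the functional equation, together with the expansion $F_{y_{6}}(t,m;\lambda ,p)=\sum_{m\geq 0}y_{6}(m,n;\lambda ,p)\frac{t^{m}}{m!}$ from (\ref{y6}), and comparing the coefficients of $\frac{t^{m}}{m!}$ on both sides yields
\[
y_{6}(m,n;\lambda ,p)=2^{-n}\sum_{k=0}^{n}\sum_{v=0}^{m}\binom{n}{k}^{p}\binom{m}{v}\lambda^{k}\,y_{1}(v,n;1)\,E_{m-v}^{(n)}(k).
\]

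Finally, to produce the numbers $B(v,n)$ of the statement I would invoke relation (\ref{CC2}), $B(v,n)=n!\,y_{1}(v,n;1)$, hence $y_{1}(v,n;1)=\frac{1}{n!}B(v,n)$; substituting this and combining the two scalar prefactors into $\frac{1}{n!2^{n}}$ gives exactly the claimed double sum. I do not anticipate a genuine obstacle, since the whole argument is the coefficient comparison the paper employs repeatedly. The only delicate point is the exact cancellation of $\left(e^{t}+1\right)^{n}$ in the functional equation, which is precisely what forces the Euler polynomials to appear at order $n$ and pins down the prefactor $\frac{1}{n!2^{n}}$.
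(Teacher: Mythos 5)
Your proof is correct and takes essentially the same route as the paper: establish the functional equation $F_{y_{6}}=2^{-n}F_{y_{1}}(t,n;1)\sum_{k}\binom{n}{k}^{p}\lambda^{k}F_{E}(t,k;n)$, form the Cauchy product, compare coefficients of $\frac{t^{m}}{m!}$, and convert $y_{1}(v,n;1)$ to $\frac{1}{n!}B(v,n)$ via (\ref{CC2}). Your explicit verification of the cancellation of $(e^{t}+1)^{n}$, and in particular your observation that the symbol $j$ must be read as the summation index $k$ for the functional equation (and hence the theorem) to hold, makes precise a point the paper leaves implicit.
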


\subsection*{Acknowledgment}

The present paper was supported by the Scientific Research Project
Administration of Akdeniz University with Project ID:4385.

\section*{References}

%\bibliography{mybibfile}

\end{document}